
\documentclass[12pt, reqno]{amsart}

\allowdisplaybreaks[1]
\usepackage{enumerate}
\usepackage{amsmath}
\usepackage{amssymb}
\usepackage{amsfonts}
\usepackage{lmodern}
\usepackage{verbatim,xspace}
\usepackage[usenames]{color}
\usepackage{tikz-cd}
\usepackage{fullpage}
\usepackage[T1]{fontenc}

\usepackage[english]{babel}

\linespread{1.1}

\makeindex

\usepackage[pagebackref,colorlinks,linkcolor=black,citecolor=black,urlcolor=red,hypertexnames=true]{hyperref}


 \newtheorem{theorem}{Theorem}[section]

 \newtheorem{corollary}[theorem]{Corollary}

 \newtheorem{lemma}[theorem]{Lemma}

 \newtheorem{proposition}[theorem]{Proposition}

\theoremstyle{definition}

\newtheorem{example}[theorem]{Example}

\theoremstyle{remark}

\newtheorem{remark}[theorem]{Remark}

\newtheorem{fact*}{Fact}



\newcommand{\N}{\mathbb{N}}

\newcommand{\C}{\mathbb{C}}

\newcommand{\FF}{\mathbb{F}}

\newcommand{\inv}{^{-1}}

\DeclareMathOperator{\Lin}{Lin}

\DeclareMathOperator{\Aut}{Aut}
\DeclareMathOperator{\id}{Id}

\newcommand\al{\alpha}

\newcommand\la{\lambda}

\newcommand{\tphi}{\tilde{\phi}}

\newcommand\bbm{\begin{bmatrix}}

\newcommand\ebm{\end{bmatrix}}

\newcommand{\bpm}{\left( \begin{smallmatrix}}

\newcommand{\epm}{\end{smallmatrix} \right)}

\numberwithin{equation}{section}

\makeatletter

\def\moverlay{\mathpalette\mov@rlay}

\def\mov@rlay#1#2{\leavevmode\vtop{
		
		\baselineskip\z@skip \lineskiplimit-\maxdimen
		
		\ialign{\hfil$#1##$\hfil\cr#2\crcr}}}

\makeatother

\newcommand{\plangle}{\moverlay{(\cr<}}

\newcommand{\prangle}{\moverlay{)\cr>}}

\newcommand{\nfpolys}[3]{\mathbb{#1}{<}{#2}_1,\ldots, {#2}_{#3}{>}}

\newcommand{\nfrats}[3]{#1\plangle{#2}_1,\ldots, {#2}_{#3}\prangle}

\newcommand{\nfratsm}[2]{#1\plangle #2\prangle}

\newcommand{\MM}[3]{M_{#1}(#2,#3)}
\newcommand{\mm}[3]{m_{#1}(#2,#3)}

\newcommand{\XXX}[3]{\tilde{X}(#1,#2,#3)}
\newcommand{\xx}[3]{\xi_#1(#2,#3)}

\newcommand{\LLL}[2]{\tilde L_{#1}(#2)}
\newcommand{\lLL}[3]{\mu_{#1}(#2,#3)}
\newcommand{\rRR}[3]{\nu_{#1}(#2,#3)}
\newcommand{\RRR}[2]{\tilde R_{#1}(#2)}

\newcommand{\YYY}[3]{\tilde{Y}(#1,#2,#3)}

\title{Periodic automorphisms of free groups are diagonalisable in free skew-fields}

\author[G. Podlogar]{Gregor Podlogar}
\address{Gregor Podlogar, 
	University of Ljubljana, Faculty of Mathematics and Physics, Ljubljana,
	Slovenia}
\email{gregor.podlogar@fmf.uni-lj.si}

\date{\today}

\keywords{Periodic automorphisms of free groups, automorphisms of free skew-field}

 \subjclass[2020]{Primary 08A35   ; Secondary 20E05, 16K40  }

\definecolor{coolblack}{rgb}{0.0, 0.18, 0.39}

\setcounter{tocdepth}{4}


\begin{document}

\begin{abstract}
We study so called weakly-periodic twisted-multiplicative automorphisms of the free skew-field.
In particular, we show that any automorphism of a free skew-field that is defined by a periodic automorphism of a free group is equivalent to a diagonal automorphism.  
\end{abstract}

\maketitle

\tableofcontents

\section{Introduction}

The free skew-field $\nfrats{\C}{x}{d}$ also named the skew-field of noncommutative rational functions is the universal skew-field of quotients of the free algebra $\nfpolys{\C}{x}{d}$ and of the group algebra of the free group $\Gamma$ generated by $x_1,\dots, x_d$.  
Thus, we use notation $\nfrats{\C}{x}{d}= \nfratsm{\C}{\Gamma}$. 

Noncommutative rational functions appear in many areas of mathematics such as control theory \cite{BGM05}, automata theory \cite{ber11}, free probability \cite{hel18} and free real algebraic geometry \cite{hel06, hel12}
In this paper, we are interested in the automorphisms of the free skew-field. In particular, in the so-called multiplicative automorphisms.
We are motivated by the study of the noncommutative rational invariants and a version of the Noncommutative Noethers problem \cite{kl20,pod21}.

Every periodic automorphism of a finitely dimensional vector space is equivalent to a diagonal automorphism if we suitably extend the scalars. We pursue a distant analogy: is every periodic automorphism of free-skew field equivalent to a diagonal automorphism?   
We answer this question for the automorphism of the free skew-fields that are defined by a periodic automorphism of the underling free group. 
The periodic automorphisms of free groups are classified \cite{dy75, mcc80}.

First we  introduce some terminology for the automorphisms of the free skew-field $\nfratsm{\C}{\Gamma}$.
\begin{enumerate}[(1)]
	\item An automorphism  $\sigma$ is \emph{linear} if for every variable $x_i$ the image  $\sigma(x_i)$ is a linear combination of the variables, i.e.,  $\sigma(x_j)\in V=\Lin_\FF\{x_1,\dots,x_d\}$.

	\item An automorphism  $\sigma$ is \emph{diagonal} if for every variable $x_j$ we have $\sigma(x_j)=c_jx_j$ for some $c_j\in \FF$.

	\item 
	An automorphism  $\sigma$ is \emph{multiplicative} if for every variable $x_j$ we have $\sigma(x_j)\in \Gamma$.

	\item 	An automorphism  $\sigma$ is \emph{twisted-multiplicative} if for every variable $x_j$ we have $c_j\sigma(x_j)\in \Gamma$ for some $c_j\in \FF$.
\end{enumerate}
Two automorphisms $\sigma_1$ and $\sigma_2$ are \emph{equivalent} if there exists an automorphism $\zeta$ such that $\zeta\sigma_1\zeta\inv= \sigma_2$.  
 An automorphism is \emph{diagonalisable} if it is equivalent to a diagonal automorphism.

Every multiplicative automorphism is defined by an automorphism of the free group $\Gamma$.
Every twisted-multiplicative automorphism is a composition of a multiplicative and a diagonal automorphism.   
We call a twisted-multiplicative automorphism \emph{weakly-periodic} if it is a composition of a periodic multiplicative automorphism and a diagonal automorphism.

\begin{theorem}\label{thm:main}
	Any weakly-periodic twisted-multiplicative automorphism of $\Aut(\nfratsm{\C}{\Gamma})$ is equivalent to  a diagonal automorphism.
\end{theorem}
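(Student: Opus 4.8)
The plan is to prove Theorem~\ref{thm:main} by reducing, through conjugations by transparently-defined automorphisms (multiplicative, diagonal, coordinatewise M\"obius, and linear ones), to a normal form in which the diagonalising change of variables can be written down explicitly.

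Write the given automorphism as $\sigma=\delta\circ\tau_\phi$, where $\tau_\phi$ is the multiplicative automorphism attached to a periodic $\phi\in\Aut(\Gamma)$ of order $n$ and $\delta$ is diagonal. I would first record two stability facts. (i) For $\psi\in\Aut(\Gamma)$ with associated multiplicative automorphism $\zeta_\psi$, the conjugate $\zeta_\psi\sigma\zeta_\psi^{-1}$ is again weakly-periodic twisted-multiplicative: indeed $\zeta_\psi\tau_\phi\zeta_\psi^{-1}=\tau_{\psi\phi\psi^{-1}}$, and the conjugate of a diagonal automorphism by a multiplicative one is again diagonal, since the scalars are central and can be pulled through the words $\psi^{-1}(x_j)$, so $x_j$ gets scaled by the product of the original scalars raised to the entries of $(\psi^{\mathrm{ab}})^{-1}$. (ii) Conjugating $\delta\tau_\phi$ by a diagonal automorphism $\eta$ leaves $\tau_\phi$ unchanged and modifies the twist by a coboundary: in additive (logarithmic) notation the twist changes by a vector in the image of $(\phi^{\mathrm{ab}})^{\top}-I$. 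Using (i) and the classification of periodic automorphisms of free groups, I would replace $\phi$ by a conjugate realised by a finite-order simplicial automorphism of a finite graph $X$ with $\pi_1(X)\cong\Gamma$; using (ii) together with the semisimplicity of $\phi^{\mathrm{ab}}$ over $\C$ (so that $\C^{d}=\ker(\phi^{\mathrm{ab}}-I)\oplus\operatorname{im}(\phi^{\mathrm{ab}}-I)$), I would then normalise the twist so that it scales only the $\phi$-fixed directions.

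Next I would diagonalise the normalised automorphism according to the orbit structure of the $\Z/n$-action on $X$. A cyclically permuted orbit of $\ell$ free generators carrying total multiplier $c$ is diagonalised by a twisted discrete Fourier transform $y_k=\sum_{j}\mu_k^{-j}x_j$ with $\mu_k^{\ell}=c$, turning that block into a diagonal automorphism whose scalars are the $\ell$ distinct $\ell$-th roots of $c$ (the $\ell$-th roots of unity when $c=1$). An edge or generator inverted by a stabiliser --- the phenomenon responsible, for instance, for eigenvalue sets like $\{i,-i\}$ on $F_2$ --- is first straightened by the Cayley-type substitution $x\mapsto(x-a)(x+a)^{-1}$ with $a^{2}$ the relevant multiplier, which converts $x\mapsto cx^{-1}$ into $u\mapsto -u$, after which a Fourier transform finishes the block. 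The genuinely higher-rank ``exotic'' blocks --- such as the order-$3$ automorphism of $F_2$, which is not a signed permutation of any free basis --- I would handle through the graph model: realise such a block $\Z/n$-equivariantly as a sub-skew-field of a permutation model $\nfratsm{\C}{F_m}$ on which the underlying automorphism cyclically permutes a free basis, and transport its transparent Fourier diagonalisation. Finally I would compose the block-wise changes of variables (together with the residual diagonal twist from step (ii)) into a single automorphism $\zeta$ and check that $\zeta\sigma\zeta^{-1}$ is diagonal, its scalars being the eigenvalues of $\phi^{\mathrm{ab}}$ twisted by the normalised twist.

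I expect the main obstacle to be the diagonalisation of the exotic blocks and, throughout, the verification that each rational change of variables written down actually extends to an automorphism of the free skew-field and not merely to a homomorphism of the free algebra --- that is, that the linear systems defining the associated elements remain full in Cohn's sense. A secondary difficulty is the graph/spanning-tree bookkeeping needed to ensure that ``permute the edges'' really induces a Fourier-type change of a free generating set of $\Gamma$. The twist, by contrast, I expect to cause no essential trouble once step (ii) has been applied: on each block it is absorbed either into an $\ell$-th root or into the Cayley parameter $a$, or else it survives on a fixed direction --- precisely where a nontrivial diagonal scalar is permitted in the conclusion.
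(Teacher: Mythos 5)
Your overall strategy --- reduce via the classification of periodic automorphisms of free groups, then diagonalise orbit by orbit using twisted discrete Fourier transforms and M\"obius substitutions --- points in the same direction as the paper, and your reduction steps (i) and (ii) are sound as far as they go. But the proposal has a genuine gap exactly where you yourself locate the main obstacle: the ``exotic'' blocks. These are not an exceptional case to be patched at the end; in McCool's classification by $n,n_1$-trees they are the generic case. The automorphism sends the last generator of a vertex $d$ to $\MM{d}{0}{\beta_d-1}\inv\MM{c}{0}{\alpha_{c,d}}$, where $c$ is the predecessor vertex, so the ``block'' attached to $d$ is not invariant on its own: the orbit of partial products $\MM{d}{0}{k}$ closes up only after $\lambda_d$ steps, where $\lambda_d$ is a multiple of $\beta_d$ determined recursively through all ancestors (Lemma~\ref{lem-nek2}(5)--(6)). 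The paper's central construction is to take the Fourier transform over these $\lambda_d$ partial products, $\XXX{d}{i}{j}=\sum_k\omega^{jk}\xx{d}{i}{k}\MM{d}{i}{k}$, with coefficients solving a linear recursion that simultaneously absorbs the diagonal twist, and then to pass to the ratios $\XXX{d}{i}{0}\inv\XXX{d}{i}{j}$ in order to cancel the non-scalar factor $D_0\inv$ produced at wrap-around. Your proposed alternative for these blocks --- embed them equivariantly into a permutation model and ``transport'' its diagonalisation --- does work for the order-$3$ automorphism of $F_2$ (take $B_0=C_0C_1\inv$, $B_1=C_1C_2\inv$ inside $F_3$ with the basis cyclically permuted), but you give no construction of such an embedding for a general $n,n$-tree with nested levels, nor any argument that the eigenvectors of the ambient permutation action lie in, let alone freely generate, the sub-skew-field. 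As stated, the decisive step of the proof is missing.

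A second omission: the classification also produces generators $T_{e,f,i,j}$ on which $\phi$ acts by two-sided multiplication, $T\mapsto \MM{f}{0}{\rho_{e,f,i}}\inv\,T\,\MM{e}{\eta_{e,f,i}}{\delta_{e,f,i}}$. Linearising these requires two-sided correction factors $\widetilde T_{e,f,i,j}=\LLL{e,f,i}{j}\,T_{e,f,i,j}\,\RRR{e,f,i}{j}$, where $\LLL{e,f,i}{j}$ and $\RRR{e,f,i}{j}$ are themselves rational sums of inverses of partial products (Proposition~\ref{prop:stran}); nothing in your orbit taxonomy (permuted / inverted / exotic) accounts for them. Finally, even granting all local constructions, you still owe the verification that the new elements form a free generating set --- precisely the ``fullness'' issue you flag. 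The paper settles it with the cardinality criterion of Theorem~\ref{thm-cohnum} together with an explicit inversion of a Vandermonde system, which in turn needs the non-vanishing statements of Lemma~\ref{lem-navad}(3) and the distinctness of the $\MM{d}{i}{k}$. Until the general tree case and the $T$-generators are handled, the proposal remains a plan rather than a proof.
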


\begin{example}\label{ex-1}
The most basic example is the diagonalisation of the automorphism of rational functions in one variable $\nfratsm{\C}{x}=\C(x)$ of order $2$ given by 
$x\mapsto cx\inv$ for any nonzero scalar $c\in C$. 
It is diagonalised in a new variable $y=(\sqrt{c}+x)\inv(\sqrt{c}-x)$, where $\sqrt{c}$ is any solution of the equation $z^2=c$.

A more interesting example is the diagonalisation of the automorphism of $\nfratsm{\C}{x_1, x_2}$ of order $3$ given by
$$
x_0\mapsto c x_1,\quad x_1\mapsto d (x_0x_1)\inv,
$$
where $c,d\in \C$ are any nonzero scalars. 
Let $\omega = \frac{-1 + \sqrt{3}i}{2}$ be the primitive third root of unity. 
The automorphism is linearised in new variables
$$
y_1=\left(1 + c^{-\frac{2}{3}}d^{-\frac{1}{3}} x_0 + c^{-\frac{1}{3}}d^{-\frac{2}{3}}x_0x_1\right)\inv
\left(1 + \omega c^{-\frac{2}{3}}d^{-\frac{1}{3}} x_0 + \omega^2 c^{-\frac{1}{3}}d^{-\frac{2}{3}}x_0x_1\right),
$$
$$
y_2=\left(1 + c^{-\frac{2}{3}}d^{-\frac{1}{3}} x_0 + c^{-\frac{1}{3}}d^{-\frac{2}{3}}x_0x_1\right)\inv
\left(1 + \omega^2 c^{-\frac{2}{3}}d^{-\frac{1}{3}} x_0 + \omega c^{-\frac{1}{3}}d^{-\frac{2}{3}}x_0x_1\right).
$$
We get
$$
y_1 \mapsto \omega^2 y_1, \quad y_2\mapsto \omega y_2.
$$
We sketch that $y_1$ and $y_2$ are generators by expressing $x_0$ and $x_1$ with them. We note that $\omega +\omega^2 =-1$. 
We get 
$$
z_0 = 2(y_1+y_2 +1)\inv = 1 + c^{-\frac{2}{3}}d^{-\frac{1}{3}} x_0 + c^{-\frac{1}{3}}d^{-\frac{2}{3}}x_0x_1
$$
Then we have $$
2 - z_0(y_1+y_2) =  c^{-\frac{2}{3}}d^{-\frac{1}{3}} x_0 + c^{-\frac{1}{3}}d^{-\frac{2}{3}}x_0x_1
$$
and
$$
-i\frac{\sqrt{3}}{3}z_0(y_1-y_2) = c^{-\frac{2}{3}}d^{-\frac{1}{3}} x_0 - c^{-\frac{1}{3}}d^{-\frac{2}{3}}x_0x_1.
$$
From here on, it is clear how to express $x_0$ and $x_1$.
\end{example}

\subsection{Paper's outline}
In Section 2, we provide preliminaries on free skew-fields and periodic automorphisms of free groups.
In Section 3, we prove Theorem~\ref{thm:main}.
In Section 4, we give some corollaries on noncommutative rational invariants of finite cyclic groups.

\section{Preliminaries}

\subsection{Noncommutative rational functions}

We introduce terminology and basic concepts surrounding noncommutative rational functions. 
We work with the base field of complex numbers $\C$. 
For a longer exposition, we refer to \cite{ami66,lew74,Coh95,Coh06,kal12,Vol18}. 

A \emph{noncommutative rational expression} is a syntactically valid combination of scalars for $\C$, variables, operations $+,\cdot,$ inverse and parenthesis, for example:
$$\big(2x_1^3x_2^4x_1^5-((x_1x_2 - x_2x_1)^{-1} + 1)^2\big)^{-1}+x_3.$$ 
Such expressions can be evaluated on tuples of square matrices of equal size with coefficients in $\C$. 
An expression is \emph{nondegenerate} if it is valid to evaluate it on some tuple of matrices. 
Two nondegenerate expressions are equivalent if they evaluate equally whenever both are defined. 
A \emph{noncommutative rational function} is an equivalence class of  a nondegenerate rational expression;
these functions form the free skew-field $\nfrats{\C}{x}{d}$. 
The free skew field $\nfrats{\C}{x}{d}$  is the universal skew-field of fractions of the free algebra  $\nfpolys{\C}{x}{d}$. 
It is universal in the sense that any epimorphism from $\nfpolys{\C}{x}{d}$ to a skew-field $D$ extends to a specialization from  $\nfrats{\C}{x}{d}$ to $D$.

We say that a skew-field is \emph{rational} (or \emph{free}) over $\C$ if it is isomorphic to the free skew-field $\nfrats{\C}{x}{d}$ for some $d\in \N$.
Variables $x_1,\dots, x_d$ in $\nfrats{\C}{x}{d}$ generate a free group $\Gamma$ under multiplication. 
The skew-field $\nfrats{\C}{x}{d}$ is also the universal field of fractions of the group algebra $\C\Gamma$, hence we also use notation $\nfrats{\C}{x}{d}=\nfratsm{\C}{\Gamma}$.

Elements $y_1,\dots,y_d$ of  a skew-field are \emph{rationally independent} over $\FF$ if the skew-field over $\FF$ generated by  $y_1,\dots,y_d$ is isomorphic to $\nfrats{\C}{x}{d}$. 
Rationally independent elements are algebraically independent, but the converse does not hold. 
A set of rationally independent elements that generates $\nfrats{\C}{x}{d}$ is called a \emph{free generating set} and its elements are \emph{free generators}.

\begin{theorem}\label{thm-cohnum}{\cite[Corollary~5.8.14]{Coh95}}
	Any generating set $y_1,\dots, y_d$ of $d$ elements of the free skew-field $\nfrats{\C}{x}{d}$ is a free generating set and any two free generating sets of a free skew-field have the same number of elements.
\end{theorem}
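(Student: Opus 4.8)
The plan is to derive both assertions from a single structural fact: the free skew-field carries a well-defined and \emph{rigid} notion of rank. First I would isolate the soft half. As recalled in the preliminaries, any epimorphism from the free algebra $\C\langle x_1,\dots,x_d\rangle$ onto a skew-field extends to a specialization of $\nfrats{\C}{x}{d}$ --- this being the universal property of $\nfrats{\C}{x}{d}$ as the universal field of fractions of the free algebra, which is a free ideal ring by the weak algorithm. Hence, given a generating set $y_1,\dots,y_d$ of $U:=\nfrats{\C}{x}{d}$, the assignment $z_i\mapsto y_i$ (writing $z_1,\dots,z_d$ for a fresh copy of the variables) exhibits $U$ as an epic image of the free algebra on the $z_i$, and so yields a specialization $\phi\colon\nfrats{\C}{z}{d}\to U$ with $\phi(z_i)=y_i$. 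The image of a specialization is a subfield, and here it contains every $y_i$, so $\phi$ is surjective. Thus the first assertion reduces to the Hopfian-type statement: \emph{a surjective specialization between free skew-fields of equal rank is an isomorphism.}

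The crux is the rigidity of the rank, and here I would invoke Cohn's structure theory of free ideal rings. The facts I need are: (i) $\nfrats{\C}{x}{m}\cong\nfrats{\C}{x}{n}$ forces $m=n$; and (ii) whenever there is a surjective specialization $F\to F'$ of free skew-fields, $\operatorname{rk}(F')\le\operatorname{rk}(F)$, with equality only when the specialization is an isomorphism --- so that (i) is the equality case of (ii). Both rest on the fact that the inner rank of matrices over the free algebra $A=\C\langle x_1,\dots,x_d\rangle$, equivalently the Sylvester rank function carried by $\nfrats{\C}{x}{d}$, is completely determined by the weak algorithm and can only decrease under universal localization and along specializations. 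One concrete trace of the underlying invariant: tensoring the fundamental exact sequence $0\to A^{\,d}\to A\to\C\to 0$ of $A$-modules with $\nfrats{\C}{x}{d}$ over $A$ yields $\nfrats{\C}{x}{d}\otimes_A\C=0$ and $\dim_{\nfrats{\C}{x}{d}}\operatorname{Tor}_1^{A}\!\big(\nfrats{\C}{x}{d},\C\big)=d-1$; but the robust form of the invariant is the rank function itself, and it is the arithmetic of full matrices over the free algebra that does the real work.

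Granting (i)--(ii), both conclusions follow at once. The specialization $\phi\colon\nfrats{\C}{z}{d}\to U$ above is surjective, its residue field $U=\nfrats{\C}{x}{d}$ has rank $d$, and its source has rank $d$, so by (ii) it is an isomorphism; hence $y_1,\dots,y_d$ is a free generating set. And if $S,S'$ are free generating sets of a skew-field $K$ with $|S|=m$ and $|S'|=n$, then by definition $K\cong\nfrats{\C}{x}{m}$ and $K\cong\nfrats{\C}{x}{n}$, so $\nfrats{\C}{x}{m}\cong\nfrats{\C}{x}{n}$ and (i) gives $m=n$; this common value is the rank of $K$. I expect the genuine obstacle to be precisely the self-contained proof of (i)--(ii): the universal property and the bookkeeping and composition of specializations are routine, whereas the rigidity of the inner rank over the free algebra --- the weak algorithm and its consequences for full matrices --- is the substantive content on which the cited corollary of Cohn ultimately rests.
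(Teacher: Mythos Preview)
The paper does not give its own proof of this statement: it is quoted verbatim as \cite[Corollary~5.8.14]{Coh95} and used as a black box. So there is nothing in the paper to compare your argument against beyond the citation itself.

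That said, your outline is a faithful sketch of the strategy behind Cohn's result. You correctly reduce the first assertion to a Hopfian statement by using the universal property of $\nfrats{\C}{x}{d}$ to produce a specialization $\nfrats{\C}{z}{d}\to U$ hitting the $y_i$, and you correctly identify that the real content lies in the rigidity of the rank (your (i)--(ii)), which in Cohn's framework comes from the inner rank / Sylvester rank function on the free algebra and its behaviour under specialization. Your Tor computation is the right kind of invariant to witness that the number $d$ is intrinsic. The only thing I would flag is phrasing: a specialization is a priori defined only on a local subring, so ``surjective specialization'' should be read as ``specialization whose residue field is all of $U$,'' which is indeed what you get since the image is a subfield containing the generators $y_i$. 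With that understood, your plan matches the architecture of Cohn's proof, and you are right that proving (i)--(ii) from scratch is where all the work is; the paper simply imports that work from \cite{Coh95}.
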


By the above theorem, any automorphism is defined by a mapping from one free generating set to another.

\subsection{Periodic automorphism of free groups and $n,n_1$-trees}

We follow \cite{mcc80}, where the $n,n_1$-trees are used to classify periodic automorphism of a free group. 

Let $\mathcal{T}$ be a rooted tree with a root $a$. 
The other vertices will be denoted by lowercase letters $b,c,d,\dots$. 
The \emph{level} $l(c)$ of a vertex $c$ is the length of the unique reduced path from the root $a$ to $c$. 
The \emph{predecessor} $p(d)$ of a non-root vertex $d$ is the vertex adjacent to $d$ with $l(p(d))=l(d)-1$.
We fix an integer $n\geq 2$ and make the following assignments:
\begin{enumerate}[(1)]
	\item 
	To each vertex $b$ we assign positive integers $\gamma_b$ and $\tau_b$ with $\gamma_b>1$ and $\gamma_b\tau_b=n$. 
	To the root vertex $a$ we assign $\tau_a=1$ and  $\gamma_a=n$.
	\item 
	To a non-root vertex $d$ and its predecessor $p(d)=c$  we assign  integers $\alpha_{c,d},\beta_d$ satisfying $\alpha_{c,d}>1,\beta_d>1, \alpha_{c,d}|\gamma_c,\beta_d|\gamma_d$ and $\gamma_c\beta_d=\al_{c,d}\gamma_d.$
	\item 
	To each ordered pair $(e,f)$ of vertices satisfying $l(e)\leq l(f)$ we assign a  family of positive integers $\delta_{e,f,i},\rho_{e,f,i}, \eta_{e,f,i}$ 
	indexed by a (possibly empty) set $I_{e,f}$, 
	satisfying $\delta_{e,f,i}|\gamma_e,\rho_{e,f,i}|\gamma_f$, $\gamma_e\rho_{e,f,i}=\gamma_f\delta_{e,f,i}$ and $0\leq  \eta_{e,f,i} < \gcd(\tau_e,\tau_f)$.
	If $e\neq f$, we additionally require $\delta_{e,f,i}>1$ and $\rho_{e,f,i}>1$.   
\end{enumerate}
Let $n_1$ be the least common multiple of the set of all $\beta_d\tau_d, \delta_{e,f,i}\tau_e$. 
The tree $\mathcal{T}$ with the assigned integers is called a \emph{(based) $n,n_1$-tree}.

Let  $\mathcal{T}$ be a $n,n_1$-tree. 
For a non-root vertex $d$ of  $\mathcal{T}$, we define a set of letters (variables)
$$
S(d)=\left\{D_0,D_1,\dots, D_{(\beta_d-1)\tau_d-1}\right\},
$$
where the indexed capital letters correspond to the same lowercase letter, e.g., $S(c)=\{C_0,C_1,\dots, C_{(\beta_c-1)\tau_c-1}\}$. 
For a pair of vertices $e,f$ satisfying $l(e)\leq l(f)$  and an index $i\in I_{e,f}$, we define 
$$
S(e,f,i)=\left\{ 
T_{e,f,i,0}, T_{e,f,i,1},\dots, T_{e,f,i,\rho_{e,f,i}\tau_f-1}
\right\}.
$$

Let $S=S(\mathcal{T})$ be a disjoint union of sets $S(d)$ and $S(e,f,i)$,
where $d$ ranges over all non-root vertices of $\mathcal{T}$  and $e,f,i$ range over all triplets  of vertices  $e,f$ with  $l(e)\leq l(f)$  and an index $i\in I_{e,f}$. 
Let $\Gamma = \Gamma_{\mathcal{T}}$ be the free group generated by $S$.

For a vertex $d$ and integers $j\geq 0, k>0$, we denote
$$
\MM{d}{j}{k}=D_jD_{j+\tau_d}D_{j+2\tau_d}\cdots D_{j+(k-1)\tau_d}
$$
and extend the notation to $\MM{d}{j}{0}=1$ (same letters typed differently are in correspondence, for example,  $\MM{c}{1}{3}=C_1C_{1+\tau_c}C_{1+2\tau_c}$).

Next, we define an  endomorphism $\phi=\phi_{\mathcal{T}}$ of $\Gamma$.
For every non-root vertex $d$ of $\mathcal{T}$,
we define 
$$\phi D_j= D_{j+1} \quad \text{for} \quad 0\leq j \leq (\beta_d-1)\tau_d-2.$$ 

Let $A_j=1$ for every non-negative integer $j$. 
We continue inductively. 
We take a non-root vertex  $d$ and $c=p(d)$.
Suppose $\phi^j C_0$ is already defined for every $j\in \N_0$ and denote 
$C_j=\phi^jC_0$. Then we set
$$
\phi D_{(\beta_d-1)\tau_d-1} = \MM{d}{0}{\beta_d-1}\inv \MM{c}{0}{\alpha_{c,d}},
$$
which defines $D_j=\phi^jD_0$ for every $j\in \N$. 
For every ordered pair of vertices  $(e,f)$ with $l(e)\leq l(f)$  and every index $i\in I_{e,f}$, we set
$$\phi T_{e,f,i,j}=T_{e,f,i,j+1}\quad \text{for} \quad 0\leq j \leq \rho_{e,f,i}\tau_f-2$$ and
$$
\phi T_{e,f,i,\rho_{e,f,i}\tau_f-1}=\MM{f}{0}{\rho_{e,f,i}}\inv T_{e,f,i,0}\MM{e}{\eta_{e,f,i}}{\delta_{e,f,i}}.
$$

\begin{theorem}
	For every $n,n_1$-tree $\mathcal{T}$, the endomorphism  $\phi_{\mathcal{T}}$ is an automorphism of  $\Gamma_{\mathcal{T}}$ of order $n_1$.
	
	If $\phi$ is a nontrivial automorphism of order $n$ of a free group $\Gamma$, then there exists a $n,n$-tree $\mathcal{T}$ and an isomorphism 
	$\psi\colon \Gamma \to \Gamma_{\mathcal{T}}$, such that  and $\phi =\psi\inv\phi_{\mathcal{T}}\psi$.
	\end{theorem}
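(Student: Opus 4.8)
The statement splits into two essentially independent parts, and the plan is to treat them in turn.

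\textbf{Part 1: $\phi_{\mathcal{T}}$ is an automorphism of order $n_1$.} For the automorphism claim I would use that a finitely generated free group is Hopfian, so it suffices to show that $\phi=\phi_{\mathcal{T}}$ is surjective; there is no need to exhibit an inverse. Surjectivity I would prove by a back-substitution organised by the level $l(d)$. Inside one block $S(d)$ the shift relations $\phi D_j=D_{j+1}$ already recover $D_1,\dots,D_{(\beta_d-1)\tau_d-1}$ from $\phi(S(d))$, and then the relation $\phi D_{(\beta_d-1)\tau_d-1}=\MM{d}{0}{\beta_d-1}^{-1}\MM{c}{0}{\alpha_{c,d}}$ can be solved for $D_0$ once the letters at $c=p(d)$ are in hand; since the letters at the root are all trivial, this induction on level terminates. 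The blocks $S(e,f,i)$ are recovered the same way from the relations $\phi T_{e,f,i,\rho_{e,f,i}\tau_f-1}=\MM{f}{0}{\rho_{e,f,i}}^{-1}T_{e,f,i,0}\MM{e}{\eta_{e,f,i}}{\delta_{e,f,i}}$, so $\phi(S)$ generates $\Gamma_{\mathcal{T}}$ and $\phi$ is an automorphism. For the order, observe that $D_j=\phi^jD_0$ and $T_{e,f,i,j}=\phi^jT_{e,f,i,0}$ by construction, so $\phi$ has order the least common multiple of the orbit lengths of the letters of $S$, i.e.\ of the minimal periods of the sequences $(D_j)_{j\ge0}$ and $(T_{e,f,i,j})_{j\ge0}$. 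Applying $\phi^k$ to the two defining relations gives the recursions
\[
D_{(\beta_d-1)\tau_d+k}=\MM{d}{k}{\beta_d-1}^{-1}\MM{c}{k}{\alpha_{c,d}},
\]
\[
T_{e,f,i,\,\rho_{e,f,i}\tau_f+k}=\MM{f}{k}{\rho_{e,f,i}}^{-1}\,T_{e,f,i,k}\,\MM{e}{\eta_{e,f,i}+k}{\delta_{e,f,i}},
\]
and I would then verify, by induction on $l(d)$ and using the arithmetic constraints $\gamma_c\beta_d=\alpha_{c,d}\gamma_d$, $\gamma_e\rho_{e,f,i}=\gamma_f\delta_{e,f,i}$, $\gamma_b\tau_b=n$ (which give in particular $\beta_d\tau_d=\alpha_{c,d}\tau_c$ and $\rho_{e,f,i}\tau_f=\delta_{e,f,i}\tau_e$), that every such sequence is periodic with period dividing $n_1$, so that $\phi^{n_1}=\mathrm{id}$; conversely, one checks that the order is divisible by each $\beta_d\tau_d$ and each $\delta_{e,f,i}\tau_e$ (for instance by tracking the orbit of $D_0$ at $d$, resp.\ of $T_{e,f,i,0}$), so the order equals $n_1$.

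\textbf{Part 2: every periodic automorphism is conjugate to some $\phi_{\mathcal{T}}$.} Here real input is needed, and I would argue geometrically. Let $\phi$ have order $n$ on a free group $\Gamma$ of finite rank. Since the only finite-order inner automorphism of a free group is the identity (the group being torsion-free with trivial centre), $\langle\phi\rangle$ embeds into $\operatorname{Out}(\Gamma)$, and $G=\Gamma\rtimes_\phi\Z/n$ contains $\Gamma$ as a normal subgroup of finite index, hence is virtually free. By the structure theory of virtually free groups (Karrass--Pietrowski--Solitar, or Bass--Serre theory applied to an action of $G$ on a tree $\widetilde{X}$ with finite stabilisers), $G$ is the fundamental group of a finite graph of finite cyclic groups; since $\Gamma$ is torsion-free it acts freely on $\widetilde{X}$, so $X=\widetilde{X}/\Gamma$ is a finite graph with $\pi_1(X)\cong\Gamma$ on which $\Z/n=G/\Gamma$ acts, and since a finite group acting on a tree fixes a point, one may choose the base vertex $v$ fixed, so that the induced action on $\pi_1(X,v)=\Gamma$ is honestly $\phi$. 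The quotient graph of groups $\mathcal{G}=X/(\Z/n)$ now carries precisely the data of an $n,n$-tree: after rooting it at a vertex whose group is all of $\Z/n$, the vertex and edge groups are cyclic subgroups $\Z/\gamma$ of $\Z/n$ (so $\tau=n/\gamma$); a spanning tree of $\mathcal{G}$ gives $\mathcal{T}$; an edge joining a vertex $d$ to its predecessor $c$ contributes $\alpha_{c,d},\beta_d$, recording the two indices of its edge group in the incident vertex groups; each remaining edge contributes $\delta_{e,f,i},\rho_{e,f,i}$ together with the coset-identification datum $\eta_{e,f,i}$; and the divisibility and range conditions are exactly those forced by the inclusions of cyclic groups. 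Finally, choosing a free basis of $\Gamma=\ker(G\to\Z/n)$ adapted to the lifts of the edges of $\mathcal{G}$ produces the letters $S(d)$ and $S(e,f,i)$; the $\Z/n$-action on $X$ realises the shifts $D_j\mapsto D_{j+1}$; and the relations of the graph of groups, pulled back through the quotient map, become the defining product relations. This change of basis is the desired isomorphism $\psi$, so $\phi=\psi^{-1}\phi_{\mathcal{T}}\psi$.

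\textbf{Where the difficulty lies.} I expect Part 1 to be long but essentially routine; the one real pitfall is the periodicity bookkeeping, where one must track how the shifts interact with the products $\MM{d}{k}{\cdot}$ as $k$ runs through a full period. The genuinely hard step is the last part of Part 2: forcing the graph of groups $\mathcal{G}$ into the rigid normal form of an $n,n_1$-tree --- making the right choices (which subtree of $\mathcal{G}$ is the ``tree part'', which coset representatives of the edge groups to use), checking that the numerical side conditions ($\gamma_b>1$, $\alpha_{c,d}\mid\gamma_c$, $\beta_d\mid\gamma_d$, $0\le\eta_{e,f,i}<\gcd(\tau_e,\tau_f)$, and the rest) are precisely what the geometry imposes, and verifying that the automorphism obtained is literally $\phi_{\mathcal{T}}$ in the normal form above rather than merely conjugate to it. A secondary difficulty is the base-point bookkeeping in the realisation, i.e.\ passing from the outer action on the graph to the honest automorphism. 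This is, in essence, McCool's theorem; the route sketched here is the geometric one, whereas McCool's original argument is a combinatorial induction on the rank.
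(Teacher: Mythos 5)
First, note that the paper does not prove this theorem at all: it is quoted as background from the classification of periodic automorphisms of free groups (\cite{dy75,mcc80}), so the comparison is really with McCool's argument rather than with anything in this manuscript. Your route is genuinely different from his. McCool's proof is a combinatorial induction on rank working directly with Whitehead-type normal forms; you instead pass through $G=\Gamma\rtimes_\phi\Z/n$, the Karrass--Pietrowski--Solitar structure theorem for virtually free groups, and the Bass--Serre tree, recovering the $n,n_1$-tree as the quotient graph of groups. This is the standard modern (geometric) proof of the realisation half, and it is cleaner conceptually; what it costs you is the final reduction to the rigid normal form --- the specific ranges and divisibilities of $\alpha_{c,d},\beta_d,\delta_{e,f,i},\rho_{e,f,i},\eta_{e,f,i}$ and the literal formulas for $\phi_{\mathcal T}$ --- which you correctly identify as the hard step but do not carry out. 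As a proof this is therefore incomplete (it is essentially a plan for re-deriving McCool's theorem), though the skeleton is right. One small slip: $\Z/n$ acts on the finite graph $X$, not on a tree, so ``a finite group acting on a tree fixes a point'' does not apply directly; you should instead lift $\Z/n$ to a finite complement in $G$, let it fix a vertex of the Bass--Serre tree $\widetilde X$, and project that vertex to $X$.

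In Part 1 your surjectivity argument (Hopficity plus back-substitution by level, solving the last relation for $D_0$) is sound and complete in outline. The order computation is where you are too quick: the minimal period of the sequence $(D_j)$ is not $\beta_d\tau_d$ in general but $\la_d\tau_d$ in the notation of the paper's Lemma~\ref{lem-nek2}(5)--(6), since $D_{\beta_d\tau_d}=\MM{d}{0}{\beta_d}\inv\MM{d}{0}{1}\MM{c}{\tau_d}{\alpha_{c,d}}\neq D_0$ once $l(d)>1$. So showing that the least common multiple of all the actual orbit periods equals $n_1$ (and not a proper multiple of it) requires exactly the kind of bookkeeping the paper does in Lemma~\ref{lem-nek2}; your parenthetical ``one checks'' is hiding a real, if routine, induction.
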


By the above theorem, it is enough to treat the automorphism that are define in terms of an $n,n$-tree.

\begin{example}
The automorphism of the free group generated by the letter $B_0$ defined by
$B_0 \mapsto B_0\inv$ corresponds to a $2,2$-tree with two vertices the root $a$ and a vertex $b$, and $\beta_b=2, \tau_b =1$.

The automorphism of the free group generated by the letters $B_0$ and $B_1$ defined by $B_0\mapsto B_1 \mapsto (B_0B_1)\inv$ corresponds to a $3,3$-tree with two vertices the root $a$ and a vertex $b$, and $\beta_b=3, \tau_b =1$.
\end{example}

\section{Proof of Theorem~\ref{thm:main}}

First, we examine some of the properties of the automorphism defined by a $n,n$-tree $\mathcal{T}$. Then we construct the elements that diagonalise the automorphism.

\begin{lemma}\label{lem-nek2} Let $d$ be a non-root vertex with predecessor $c$.
	\begin{enumerate}[(1)]
		\item $\alpha_{c,d}\tau_c=\beta_d\tau_d$
		\item For any non-negative integer $j$, we have $$D_{(\beta_d+j)\tau_d}=\MM{d}{0}{\beta_d+j}\inv\MM{d}{0}{j+1}\MM{c}{(j+1)\tau_d}{\alpha_{c,d}}.$$
		\item For any non-negative integer $j$, we have 
		\begin{equation*}
			\MM{d}{0}{\beta_d+j}=\MM{d}{0}{j}\MM{c}{j\tau_d}{\alpha_{c,d}}.
		\end{equation*}
		
		\item For any non-negative integers $i,j$ and $k$, we have 
		\begin{equation*}
			\MM{d}{i}{k\beta_d+j}=\MM{d}i{j}\MM{c}{j\tau_d+i}{k\alpha_{c,d}}.
		\end{equation*}
		
		\item There exists $\la\in \N$ such that $\MM{d}0\la=1$.
		Let $\la_d \in \N$  the smallest such that  $\MM{d}0{\la_d}=1$, then we have $\beta_d|\la_d$ and $\la_d|\gamma_d$.
		Furthermore,  $\MM{d}j\la=1$  if and only if $\la$ is divisible by $\la_d$. 
		
		\item Write $\la_d = u_d\beta_d$, let $c = p(d)$ be the predecessor and $\la_c$ the smallest positive integer such that $\MM{c}0{\la_c}=1$. 
		Then
		$u_d\alpha_{c,d}$ is the least common multiple of $\alpha_{c,d}$ and $\la_d$.
	\end{enumerate}
\end{lemma}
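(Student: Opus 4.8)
The plan is to establish parts (1)--(6) in the stated order, each feeding the next, using two structural facts repeatedly: (a) since $\phi$ is an automorphism and $D_j=\phi^jD_0$, one has $\phi^s\MM{d}{i}{k}=\MM{d}{i+s}{k}$ for all $s\ge 0$, and likewise for $C$; and (b) $\Gamma$ is free on the disjoint union of the sets $S(e)$ and $S(e,f,i)$, so a nonempty reduced word in letters from a single $S(d)$ is never trivial, and two subgroups generated by disjoint subsets of these generators intersect only in the identity.

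Part (1) is pure arithmetic, combining $\gamma_c\tau_c=n=\gamma_d\tau_d$ with $\gamma_c\beta_d=\alpha_{c,d}\gamma_d$. For (3) I would first peel the last factor off $\MM{d}{0}{\beta_d}=\MM{d}{0}{\beta_d-1}D_{(\beta_d-1)\tau_d}$ and substitute the defining relation $D_{(\beta_d-1)\tau_d}=\MM{d}{0}{\beta_d-1}\inv\MM{c}{0}{\alpha_{c,d}}$, getting the base identity $\MM{d}{0}{\beta_d}=\MM{c}{0}{\alpha_{c,d}}$; then splitting $\MM{d}{0}{\beta_d+j}$ after its $j$-th factor gives $\MM{d}{0}{\beta_d+j}=\MM{d}{0}{j}\,\MM{d}{j\tau_d}{\beta_d}$, and applying $\phi^{j\tau_d}$ to the base identity rewrites $\MM{d}{j\tau_d}{\beta_d}$ as $\MM{c}{j\tau_d}{\alpha_{c,d}}$. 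Part (2) is then immediate by telescoping, $D_{(\beta_d+j)\tau_d}=\MM{d}{0}{\beta_d+j}\inv\MM{d}{0}{\beta_d+j+1}$, followed by (3) applied to $\MM{d}{0}{\beta_d+j+1}=\MM{d}{0}{(j+1)+\beta_d}$. Part (4) I would prove first for $i=0$ by induction on $k$: the step writes $\MM{d}{0}{(k+1)\beta_d+j}=\MM{d}{0}{k\beta_d+(\beta_d+j)}$, applies the inductive hypothesis and then (3), and finally glues the two $C$-words into one --- here it is exactly the identity $\beta_d\tau_d=\alpha_{c,d}\tau_c$ of (1) that makes the two blocks of $C$-letters concatenate to $\MM{c}{j\tau_d}{(k+1)\alpha_{c,d}}$; the general $i$ is then obtained by applying $\phi^i$.

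The content is in (5), which I would prove by induction on the level $l(d)$, with the root acting as a degenerate predecessor (all $A_j=1$, so $\MM{a}{\cdot}{\cdot}\equiv 1$ and "$\lambda_a=1$"). For existence, (4) gives $\MM{d}{0}{\lambda_c\beta_d}=\MM{c}{0}{\lambda_c\alpha_{c,d}}$, which is $1$ by the inductive hypothesis applied to $c=p(d)$. For divisibility by $\beta_d$: if $\MM{d}{0}{\lambda}=1$, write $\lambda=k\beta_d+r$ with $0\le r<\beta_d$, so by (4) $1=\MM{d}{0}{r}\,\MM{c}{r\tau_d}{k\alpha_{c,d}}$; now $\MM{d}{0}{r}$ is a reduced word of length $r$ in distinct genuine generators of $S(d)$ (genuine since $(r-1)\tau_d\le(\beta_d-1)\tau_d-1$), while $\MM{c}{r\tau_d}{k\alpha_{c,d}}$ is a product of elements $C_m=\phi^mC_0$, each of which lies in the subgroup $H_c$ generated by the $S(e)$ with $e$ equal to $c$ or an ancestor of $c$. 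This inclusion $C_m\in H_c$ is the one genuinely group-theoretic point, and I would prove $\phi(H_c)\subseteq H_c$ by induction on $l(c)$: the only nontrivial move of $\phi$ on generators of $H_c$ is $\phi C_{(\beta_c-1)\tau_c-1}=\MM{c}{0}{\beta_c-1}\inv\MM{p(c)}{0}{\alpha_{p(c),c}}$, whose factors involve only $S(c)$-letters and elements $\phi^mP_0$ with $P_0\in S(p(c))$, hence stay in $H_{p(c)}\subseteq H_c$. Since $S(d)$ is disjoint from every $S(e)$ occurring in $H_c$, fact (b) makes $\langle S(d)\rangle$ and $H_c$ meet only in $1$, so $\MM{d}{0}{r}=(\MM{c}{r\tau_d}{k\alpha_{c,d}})\inv$ is trivial, forcing $r=0$.

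Writing $\lambda_d=u_d\beta_d$, the identity $\MM{d}{0}{\lambda_d}=\MM{c}{0}{u_d\alpha_{c,d}}$ together with the inductive hypothesis for $c$ shows $u_d$ is the least $u\ge 1$ with $\lambda_c\mid u\alpha_{c,d}$, that is, $u_d=\lambda_c/\gcd(\lambda_c,\alpha_{c,d})$; this is precisely part (6) (so $u_d\alpha_{c,d}=\operatorname{lcm}(\alpha_{c,d},\lambda_c)$), and it also yields $\lambda_d\mid\gamma_d$ after rewriting $\gamma_d=\gamma_c\beta_d/\alpha_{c,d}$ and using $\lambda_c\mid\gamma_c$ and $\alpha_{c,d}\mid\gamma_c$ (so $\operatorname{lcm}(\lambda_c,\alpha_{c,d})\mid\gamma_c$), the base being the root where $\lambda_a=1\mid\gamma_a$. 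Finally "$\MM{d}{j}{\lambda}=1$ iff $\lambda_d\mid\lambda$" is routine: by injectivity of $\phi^j$ it reduces to the case $j=0$; write $\lambda=q\lambda_d+r$ with $0\le r<\lambda_d$, split the product into $q$ blocks of length $\lambda_d$, each a $\phi$-translate of $\MM{d}{0}{\lambda_d}=1$, plus a remainder block that is a $\phi$-translate of $\MM{d}{0}{r}$, and invoke the minimality of $\lambda_d$. The only real obstacle, as flagged, is the $\phi$-invariance of $H_c$ underlying the $\beta_d\mid\lambda$ step; everything else is bookkeeping with (a) and (b).
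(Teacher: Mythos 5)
Your proof is correct and follows essentially the same route as the paper: the same arithmetic for (1), the same telescoping/induction identities for (2)--(4) (merely proving (3) before (2) rather than after), and the same induction on $l(d)$ for (5)--(6). The two points where you go beyond the paper are both sound and welcome: you justify explicitly, via the $\phi$-invariant subgroup $H_c$ generated by the letters at $c$ and its ancestors, why $\MM{d}{0}{r}$ cannot cancel against $\MM{c}{r\tau_d}{k\alpha_{c,d}}$ (the paper only asserts that no common variables appear), and you correctly read (6) as $u_d\alpha_{c,d}=\operatorname{lcm}(\alpha_{c,d},\la_c)$, which is what the proof establishes and what Lemma~\ref{lem-navad}(3) later uses -- the ``$\la_d$'' in the statement is a typo.
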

\begin{proof}
	(1) From $\gamma_c\tau_c=\gamma_d\tau_d=n$ and $\gamma_c\beta_d=\alpha_{c,d}\gamma_d$ it follows that 
	$$
	\alpha_{c,d}(\gamma_c\tau_c)=(\alpha_{c,d}\gamma_d)\tau_d=\gamma_c\beta_d\tau_d,
	$$
	hence $\alpha_{c,d}\tau_c=\beta_d\tau_d$. 
	
	(2) We use induction on $j$. 
	For the base case $j=0$, we have 
	$$
	D_{\beta_d\tau_d}=\phi^{\tau_d}D_{(\beta_d-1)\tau_d}=\phi^{\tau_d}\left(\MM{d}{0}{\beta_d-1}\inv \MM{d}{0}{\alpha_{c,d}}\right)=\MM{d}{\tau_d}{\beta_d-1}\inv \MM{c}{\tau_d}{\alpha_{c,d}}
	$$
	and $\MM{d}{\tau_d}{\beta_d-1}=D_0\inv \MM{d}{0}{\beta_d}=\MM{d}{0}{1}\inv\MM{d}{0}{\beta_d}$, hence 
	$$D_{\beta_d\tau_d}=\MM{d}{0}{\beta_d}\inv\MM{d}{0}{1}\MM{d}{\tau_d}{\alpha_{c,d}}.$$
	Going from $j$ to $j+1$ we have
	$$
	D_{(\beta_d+j+1)\tau_d}=\phi^{\tau_d}(D_{(\beta_d+j)\tau_d})=
	\phi^{\tau_d}\left(\MM{d}{0}{\beta_d+j}\inv\MM{d}0{j+1}\MM{c}{(j+1)\tau_d}{\alpha_{c,d}}\right)=
	$$
	$$
	\MM{d}{\tau_d}{\beta_d+j}\inv\MM{d}{\tau_d}{j+1}\MM{c}{(j+2)\tau_d}{\alpha_{c,d}}.
	$$
	Applying $\MM{d}{\tau_d}{\beta_d+j}=D_0\inv \MM{d}{0}{\beta_d+j+1}$ and $D_0\MM{d}{\tau_d}{j+1}=\MM{d}{0}{j+1}$, we get 
	$$D_{(\beta_d+j+1)\tau_d}=\MM{d}0{\beta_d+j+1}\inv\MM{d}0{j+2}\MM{c}{(j+2)\tau_d}{\alpha_{c,d}}.$$
	
	(3) We again use induction. 
	For  $j=0$, we get
	$$
	\MM{d}0{\beta_d}=\MM{d}{0}{\beta_d-1}D_{(\beta_d-1)\tau_d}=\MM{d}0{\beta_d-1}\MM{d}0{\beta_d-1}\inv \MM{c}{0}{\alpha_{c,d}}=\MM{c}0{\alpha_{c,d}}.
	$$
	For $j>0$,  we have 
	$$
	\MM{d}0{\beta_d+j}=	\MM{d}0{\beta_d+j-1}D_{(\beta_d+j-1)\tau_d}.
	$$
	Applying  (2), we get $\MM{d}{0}{\beta_d+j}=\MM{d}0{j}\MM{c}{j\tau_d}{\alpha_{c,d}}.$
	
	(4) It is enough to show the equality for $i=0$. 
	For other values, we just apply $\phi^i$.
	We use induction on $k$. The case $k=0$ is trivial and the case $k=1$ is covered in  (3). 
	Going from $k$ to $k+1$ using  (3) and induction hypothesis we get 
	$$
	\MM{d}0{(k+1)\beta_d+j}=\MM{d}0{k\beta_d+j}\MM{c}{k\beta_d\tau_d+j\tau_d}{\alpha_{c,d}}=
	$$
	$$
	=\MM{d}0{j}\MM{c}{j\tau_d}{k\al_{c,d}}\MM{c}{k\beta_d\tau_d+j\tau_d}{\alpha_{c,d}}.
	$$
	Applying $\beta_d\tau_d=\al_{c,d}\tau_c$ and 
	$
	\MM{c}{j\tau_d}{k\al_{c,d}}\MM{c}{k\al_{c,d}\tau_c+j\tau_d}{\alpha_{c,d}}=
	\MM{c}{j\tau_d}{(k+1)\al_{c,d}}
	$ we get
	$$\MM{d}0{(k+1)\beta_d+j}=\MM{d}0{j}\MM{c}{j\tau_d}{(k+1)\al_{c,d}}.$$

	(5) and (6)	Suppose $\mathcal{D}_0(\la)=1$ and write $\la=k\beta_d + j$ for some $k\geq 0$ and $\beta_d>j\geq0$. 
	By  (4) we get
	$$
	\MM{d}0{\la}=\MM{d}0j\MM{c}{j\tau_d}{k\al_{c,d}}.
	$$
	For  $\beta_d>j>0$,  none of the variables appearing in $\MM{c}{j\tau_d}{k\al_{c,d}}$ appear in $\MM{d}0j$ and clearly $\MM{d}0j\neq 1$, hence,  $\beta_d|\la$.
	
	Assuming the existence of $\la_d$, we show that  $\MM{d}0\la=1$ if and only if $\la$ is divisible by $\la_d$.
	First assume $\mathcal{D}_0(\la)=1$.
	Both $\la$ and $\la_d$ are of the form $n\beta_d$ and $m\beta_d$. 
	Write $n=sm+r$ with $0\leq r < m$, then
	$$
	1=\MM{d}0{n\beta_d}=
	\MM{d}0{sm\beta_d+r\beta_d}=
	$$
	$$
	\MM{d}0{m\beta_d}\MM{d}{m\beta_d\tau_d}{m\beta_d}\cdots \MM{d}{(s-1)m\beta_d\tau_d}{m\beta_d}\MM{d}{sm\beta_d\tau_d}{r\beta_d}=\MM{d}{sm\beta_d\tau_d}{r\beta_d}
	$$ 
	and the minimality of $m$ implies $r=0$. 
	The same calculation shows that $\MM{d}0{s\la_d}=1$ for every $s \in \N$.
	
	Finally, we show the existence of $\la_d$.
	We use induction on $l(d)$.
	Denote $p(d)=c$.
	
	For $l(d)=1$, we have $c=a$. Then $C_j=A_j=1$ for every $j$, hence $\MM{d}0{\beta_d}=\MM{a}0{\alpha_{c,d}}=1$. 
	Therefore $\la_d=\beta_d$ and $\la_d|\gamma_d$.
	
	For $l(d)>1$,
	let $\mu$ be the least common multiple of $\la_c$ and $\al_{c,d}$ and write $\mu=v\la_c=u\al_{c,d}$. 
	Using (4), we have $$\MM{d}0{u\beta_d}= \MM{c}{0}{u\al_{c,d}}=\MM{c}{0}{v\la_{c}}=1.$$
	
	In fact, $\la_d=u\beta_d$.
	Suppose otherwise, let $1=\MM{d}0{k\beta_d}$ for $k$ smaller than $u$, then $k\al_{c,d}$ is divisible by $\la_c$  which contradicts the minimality of $\mu$.

	Multiplying $\gamma_c\beta_d=\alpha_{c,d}\gamma_d$ by $u$, we get
	$
	\gamma_c\la_d=\mu\gamma_d.
	$
	Since $\la_c$ and $a_{c,d}$ both divide $\gamma_c$, their least common multiple $\mu$ divides $\gamma_c$ as well, therefore $\la_d|\gamma_d$.
\end{proof}

\begin{remark}
	If  the equality $\MM{d}j{\la} = 1$ holds for some nonnegative integer $j$, then it holds for all nonnegative integers, as 
	$\MM{d}{j'}{\la} = \phi^{j'-j} (\MM{d}{j}{\la}) =  \phi^{j'-j}(1) = 1$.
\end{remark}

Suppose $\tphi = \phi \varphi$ is a weakly-periodic twisted-multiplicative automorphism of $\nfratsm{\C}{\Gamma}$, where $\phi$ is a periodic automorphism of  $\nfratsm{\C}{\Gamma}$ defined by a $n,n$-tree $\mathcal{T}$.
For each vertex $d$ of $\mathcal{T}$ and each integer $j\geq 0$, we have  $\tphi(D_j)=d_{j+1}D_{j+1}$ for some scalar  $d_{j+1}\in \C$.
For integers $j,k$, we denote 
$$
m_d(j,k)=\prod_{l=0}^{k-1} d_{j + l\tau_d}
$$
and extend the notation to $m_d(j,0)=1$.
Then  $\tphi(D_j)^k=\left(\prod_{l=1}^{k} d_{j+l}\right) D_{j+k}$ and $\tphi\left(M_d(j,k)\right) = m_d(j+1,k)M_d(j+1,k).$

%
%
%

We begin the construction of elements of $\nfratsm{\C}{\Gamma}$ that diagonalise $\tphi$.
Pick a non-root vertex $d$ of the tree $\mathcal{T}$ and let $\omega$ be a primitive $\beta_d$-th root of unity.
\begin{proposition}\label{prop-skew1}
	There exist nonzero scalars $\xi_d$ and $\xx{d}{i}{k}$ for $k=0,\dots,\la_d-1$ and $i=0,\dots,\tau_d-1$ such that for

	$$\XXX{d}{i}{j}=\sum_{k=0}^{\la_d-1}\omega^{jk}\xx{d}{i}{k}\MM dik$$ the following equations hold:
	
	\begin{enumerate}
		\item $\tphi\left( \XXX{d}{i}{j} \right) = \XXX{d}{i+1}{j}$ for $i=0,\dots,\tau_d-1$;
		\item $\tphi\left( \XXX{d}{\tau_d-1}{j} \right) = \omega^{-j}\xi_dD_0\inv\XXX{d}{0}{j}$.
	\end{enumerate}
\end{proposition}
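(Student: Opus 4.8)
The plan is to use~(1) to reduce the problem to choosing the $\la_d$ scalars $\xx d0k$ together with the single scalar $\xi_d$, and then to show that~(2) collapses to a cyclic multiplicative system of scalar equations whose only obstruction is removed by extracting a $\la_d$-th root in $\C$. First I would apply $\tphi$ termwise: using $\tphi(\MM dik)=\mm d{i+1}k\MM d{i+1}k$ (recorded just before the proposition),
\[
\tphi\big(\XXX dij\big)=\sum_{k=0}^{\la_d-1}\omega^{jk}\,\mm d{i+1}k\,\xx dik\,\MM d{i+1}k .
\]
The structural fact to invoke is that, for a fixed middle index, the elements $\MM di0,\dots,\MM di{\la_d-1}$ of $\Gamma$ are pairwise distinct: an equality $\MM dik=\MM di{k'}$ with $k<k'$ would give $\MM d{i+k\tau_d}{k'-k}=1$ with $0<k'-k<\la_d$, contradicting Lemma~\ref{lem-nek2}(5). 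Hence they are linearly independent over $\C$, and comparing coefficients shows~(1) is equivalent to the recursion $\xx d{i+1}k=\mm d{i+1}k\,\xx dik$. So once nonzero scalars $a_k:=\xx d0k$ are chosen, all $\xx dik$ are determined, automatically nonzero, and $\xx d{\tau_d-1}k\,\mm d{\tau_d}k=q_k a_k$ with $q_k:=\prod_{l=1}^{\tau_d}\mm dlk$ a nonzero scalar.

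Next I would expand the left side of~(2). From $\tphi(\MM d{\tau_d-1}k)=\mm d{\tau_d}k\MM d{\tau_d}k$ and the identity $\MM d{\tau_d}k=D_0\inv\MM d0{k+1}$ in $\Gamma$,
\[
\tphi\big(\XXX d{\tau_d-1}j\big)=D_0\inv\sum_{k=0}^{\la_d-1}\omega^{jk}q_k a_k\,\MM d0{k+1}=\omega^{-j}D_0\inv\sum_{r=1}^{\la_d}\omega^{jr}q_{r-1}a_{r-1}\,\MM d0r .
\]
Here the range $k=0,\dots,\la_d-1$ is essential: in the term $r=\la_d$ one has $\MM d0{\la_d}=1=\MM d00$ by Lemma~\ref{lem-nek2}(5), while $\omega^{j\la_d}=1$ since $\beta_d\mid\la_d$ (again Lemma~\ref{lem-nek2}(5)) and $\omega^{\beta_d}=1$; thus that term folds onto the constant term. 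Cancelling the invertible factor $\omega^{-j}D_0\inv$ and comparing coefficients of the linearly independent $\MM d00,\dots,\MM d0{\la_d-1}$ with those of $\omega^{-j}\xi_d D_0\inv\XXX d0j$, equation~(2) becomes the cyclic system
\[
q_{k-1}a_{k-1}=\xi_d a_k\ \ (1\le k\le\la_d-1),\qquad q_{\la_d-1}a_{\la_d-1}=\xi_d a_0 .
\]

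Finally I would solve this system. Multiplying all $\la_d$ relations and cancelling the nonzero product $a_0\cdots a_{\la_d-1}$ forces the single compatibility relation $\xi_d^{\la_d}=\prod_{k=0}^{\la_d-1}q_k$; conversely, letting $\xi_d$ be any $\la_d$-th root of the nonzero scalar $\prod_k q_k$, setting $a_0:=1$ and $a_k:=\xi_d^{-k}\prod_{l=0}^{k-1}q_l$ produces nonzero scalars satisfying the first $\la_d-1$ equations by construction and the last one because $\xi_d^{-(\la_d-1)}\prod_{l=0}^{\la_d-1}q_l=\xi_d$. Setting $\xx d0k:=a_k$ and propagating via the recursion of the first paragraph then yields the required nonzero scalars. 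I expect the only genuinely delicate point to be the bookkeeping in~(2): recognising that $\MM d0{\la_d}=1$ wraps the shifted summation back onto the constant term and that this is consistent precisely because $\omega^{\la_d}=1$; after that, nothing remains but a cyclic multiplicative recursion with one scalar obstruction, disposed of by the closure of $\C$ under $\la_d$-th roots.
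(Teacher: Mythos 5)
Your proposal is correct and follows essentially the same route as the paper: apply $\tphi$ termwise, use $\MM{d}{\tau_d}{k}=D_0\inv\MM{d}{0}{k+1}$ and $\MM{d}{0}{\la_d}=1$ with $\beta_d\mid\la_d$ to fold the shifted sum back onto the constant term, and reduce to a cyclic scalar recursion solved by taking $\xi_d$ to be a $\la_d$-th root of $\prod_k q_k$. The only cosmetic difference is that you justify the coefficient comparison via the distinctness of the $\MM{d}{i}{k}$ up front (the paper records this in a separate lemma afterwards), which is harmless since for existence one only needs the matched coefficients to be sufficient.
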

\begin{proof}
	For $i=0,\dots,\tau_d-2$, we have
	$$\tphi\left( \XXX{d}{i}{j} \right)= \sum_{k=0}^{\la_d-1}\omega^{jk}\xx{d}{i}{k}\mm{d}{i+1}{k}\MM d{i+1}k.$$
	We get equations $\xx{d}{i}{k}\mm{d}{i+1}{k}= \xx{d}{i+1}{k}$ for $i=0,\dots,\tau_d-2$ that translate to
	\begin{equation}\label{eq:prva}
		\xx{d}{i}{k} = \xx{d}{0}{k} \prod_{l=1}^{i}\mm{d}{l}{k}
	\end{equation} 
	for $i=0,\dots,\tau_d-2$.
	
	We have  
	$$
	\tphi\left( \XXX{d}{\tau_d-1}{j} \right) 
	= 
	\sum_{k=0}^{\la_d-1}\omega^{jk}\xx{d}{\tau_d-1}{k}\mm d{\tau_d}k\MM d{\tau_d}k
	$$
	We equate the above expression to $\omega^{-j}\xi_dD_0\inv\XXX{d}{0}{j}$ and note the equality $\MM d{\tau_d}k=D_0\inv\MM d{0}{k+1}$.
	Comparing the coefficients at $D_0\inv\MM d{0}{k}$  for $k=1,\dots,\la_d-1$, we get 
	$$
	\omega^{-j}\xi_d\omega^{jk}\xx{d}{0}{k} = \omega^{(k-1)j}\xx{d}{\tau_d-1}{k-1}\mm{d}{\tau_d}{k-1}.
	$$
	We apply \eqref{eq:prva} and get 
	$$
	\xx{d}{0}{k} =  \xi_d\inv \xx{d}{0}{k-1}\prod_{l=1}^{\tau_d}\mm{d}{l}{k-1}
	$$
	or equivalently 
	\begin{equation}\label{eq:druga}
		\xx{d}{0}{k} =  \xi_d^{-k} \xx{d}{0}{0}\prod_{l=1}^{\tau_d}\prod_{t=0}^{k-1}\mm{d}{l}{t}
	\end{equation}
	for $k=1,\dots,\la_d-1$.
	
	By equating coefficients at  $D_0\inv\MM d{0}{0}= D_0\inv\MM{d}{0}{\la_d}$, we get
	$$\omega^{-j}\xi_d\xx{d}{0}{0} = \omega^{(\la_d-1)j}\xx{d}{\tau_d-1}{\la_d-1}\mm{d}{\tau_d}{\la_d-1}.$$
	Since $\beta_d |\la_d$, we have $\omega^{(\la_d-1)j}= \omega^{-j}$.
	Applying \eqref{eq:druga}, we get 
	$$
	\xi_d \xx{d}{0}{0} = \xi_d^{-\la_d+1} \xx{d}{0}{0} \prod_{l=1}^{\tau_d}\prod_{t=0}^{\la_d-1}\mm{d}{l}{t}.
	$$
	Finally, we set $\xx{d}{0}{0}=1$ and $\xi_d$ to be any $\la_d$-th root of  $\prod_{l=1}^{\tau_d}\prod_{t=0}^{\la_d-1}\mm{d}{l}{t}$. This defines the scalars such that the desired equations are satisfied.
\end{proof}

The next lemma shows that the elements 
$$\XXX{d}{i}{j}=\sum_{k=0}^{\la_d-1}\omega^{jk}\xx{d}{i}{k}\MM dik$$
are non-zero, hence they have an inverse.
\begin{lemma}
	The elements $\MM dik$, $k=0,\dots,\la_d-1$ are distinct.
\end{lemma}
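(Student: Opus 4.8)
The plan is to reduce the statement at once to Lemma~\ref{lem-nek2}(5), which pins down exactly when a $\tau_d$-progression product is trivial: $\MM dj\la=1$ precisely when $\la_d\mid\la$. Recall that the statement fixes the non-root vertex $d$ together with an offset $i\in\{0,\dots,\tau_d-1\}$, so what must be shown is that the $\la_d$ group elements $\MM di0,\MM di1,\dots,\MM di{\la_d-1}$ are pairwise distinct in $\Gamma$.

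The first step is to record the telescoping identity
$$\MM di{k'}=\MM dik\,\MM d{i+k\tau_d}{k'-k}\qquad (0\le k\le k'),$$
which is immediate from the definition $\MM dim=D_iD_{i+\tau_d}\cdots D_{i+(m-1)\tau_d}$ by splitting the product after its $k$-th factor; equivalently $\MM dik\inv\MM di{k'}=\MM d{i+k\tau_d}{k'-k}$. Now I would argue by contradiction: suppose $\MM dik=\MM di{k'}$ for some $0\le k<k'\le\la_d-1$. Then $\MM d{i+k\tau_d}{k'-k}=1$, so by Lemma~\ref{lem-nek2}(5) (its last clause, valid for an arbitrary starting index, and reinforced by the Remark following that lemma) we get $\la_d\mid(k'-k)$. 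But $0<k'-k\le\la_d-1<\la_d$, a contradiction. Hence the $\MM dik$, $k=0,\dots,\la_d-1$, are distinct. As a consequence, $\XXX dij=\sum_{k=0}^{\la_d-1}\omega^{jk}\xx dik\MM dik$ is a $\C$-linear combination of pairwise distinct elements of the group $\Gamma\subseteq\nfratsm{\C}{\Gamma}$ with all coefficients nonzero (since $\omega\ne0$ and $\xx dik\ne0$ by Proposition~\ref{prop-skew1}), hence $\XXX dij\ne0$ and is invertible in $\nfratsm{\C}{\Gamma}$, which is the point of the lemma.

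There is no serious obstacle in this argument; the only things that need a moment's care are that Lemma~\ref{lem-nek2}(5) is applied with the shifted starting index $i+k\tau_d$ rather than $0$ — which that lemma's statement already permits — and that restricting $k$ to $\{0,\dots,\la_d-1\}$ forces the strict inequality $k'-k<\la_d$, which is exactly what rules out divisibility by $\la_d$. (Note that the restriction is essential: without it the elements would not be distinct, since $\MM di{\la_d}=1=\MM di0$.)
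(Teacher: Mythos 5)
Your argument is correct and is essentially identical to the paper's proof: both split $\MM di{k_2}=\MM di{k_1}\MM d{i+k_1\tau_d}{k_2-k_1}$ and deduce $\MM d{i+k_1\tau_d}{k_2-k_1}=1$, contradicting the characterization of $\la_d$ from Lemma~\ref{lem-nek2}(5). Your version is slightly more explicit in invoking the divisibility clause $\la_d\mid\la$ at the shifted index (where the paper just cites ``minimality''), but the substance is the same.
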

\begin{proof}
Suppose	$\MM di{k_1}=\MM di{k_2}$ for $0\leq k_1 < k_2 \leq \la_d-1$. 
By definition, we have
$\MM di{k_2} =\MM di{k_1}\MM d{i+k_1\tau_d}{k_2-k_1}$, hence $\MM d{i+k_1\tau_d}{k_2-k_1}=1$. 
This contradicts the minimality of $\la_d$.
	\end{proof}

We fix the scalars $\xi_d$ and $\xx{d}{i}{k}$ from Proposition~\ref{prop-skew1} and denote 
$$\YYY{d}{i}{j} = \XXX{d}{i}{0}\inv \XXX{d}{i}{j}$$
for $i=0,\dots, \tau_d-1$ and $j=1,\dots, \beta_d-1$. The inverse $\XXX{d}{i}{0}\inv$ exists by the previous lemma.

\begin{lemma} \phantomsection \label{lem-lin1}
	\begin{enumerate}[(1)]
		\item $\tphi\left(\YYY{d}{i}{j}\right)= \tphi\left(\YYY{d}{i+1}{j}\right)$ for  $i=0,\dots, \tau_d-2$,
		\item $\tphi\left(\YYY{d}{\tau_d-1}{j}\right)= \omega^{-j}\tphi\left(\YYY{d}{0}{j}\right).$
	\end{enumerate}
\end{lemma}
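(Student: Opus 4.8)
The plan is to read both identities off Proposition~\ref{prop-skew1} directly. Concretely, what has to be shown is that $\tphi$ sends $\YYY{d}{i}{j}$ to $\YYY{d}{i+1}{j}$ for $0\le i\le\tau_d-2$ and sends $\YYY{d}{\tau_d-1}{j}$ to $\omega^{-j}\YYY{d}{0}{j}$; these are the eigen-type relations that will let the elements $\YYY{d}{i}{j}$ diagonalise $\tphi$. The only tools needed are that $\tphi$ is an automorphism of $\nfratsm{\C}{\Gamma}$, hence multiplicative and inverse-preserving, and that the scalars $\omega^{-j}$ and $\xi_d$ lie in the central subfield $\C$. First I would record that, by the lemma on distinctness of the monomials $\MM dik$, $k=0,\dots,\la_d-1$, together with the non-vanishing of the scalars $\xx{d}{i}{k}$ from Proposition~\ref{prop-skew1}, each $\XXX{d}{i}{0}$ (for $i=0,\dots,\tau_d-1$) is a nontrivial $\C$-combination of distinct group elements and so is nonzero; thus $\YYY{d}{i}{j}=\XXX{d}{i}{0}\inv\XXX{d}{i}{j}$ is well defined for all admissible $i,j$, as already observed before the statement.

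For part (1), fix $i$ with $0\le i\le\tau_d-2$. Applying $\tphi$ to the definition of $\YYY{d}{i}{j}$ and using $\tphi(ab\inv)=\tphi(a)\,\tphi(b)\inv$ gives $\tphi(\YYY{d}{i}{j})=\tphi(\XXX{d}{i}{0})\inv\,\tphi(\XXX{d}{i}{j})$. Now Proposition~\ref{prop-skew1}(1), applied to $\XXX{d}{i}{0}$ and to $\XXX{d}{i}{j}$, rewrites the right-hand side as $\XXX{d}{i+1}{0}\inv\XXX{d}{i+1}{j}=\YYY{d}{i+1}{j}$, which is the asserted identity.

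For part (2), take $i=\tau_d-1$. Proposition~\ref{prop-skew1}(2) gives $\tphi(\XXX{d}{\tau_d-1}{j})=\omega^{-j}\xi_dD_0\inv\XXX{d}{0}{j}$ for every $j\ge 0$; specialising to $j=0$, where $\omega^{0}=1$, yields $\tphi(\XXX{d}{\tau_d-1}{0})=\xi_dD_0\inv\XXX{d}{0}{0}$. Hence
$$
\tphi\bigl(\YYY{d}{\tau_d-1}{j}\bigr)
=\bigl(\xi_dD_0\inv\XXX{d}{0}{0}\bigr)\inv\bigl(\omega^{-j}\xi_dD_0\inv\XXX{d}{0}{j}\bigr)
=\XXX{d}{0}{0}\inv D_0\,\xi_d\inv\,\omega^{-j}\xi_d\,D_0\inv\XXX{d}{0}{j}.
$$
Since $\omega^{-j}$ and $\xi_d$ are central, I would move $\omega^{-j}$ to the front and cancel $\xi_d\inv\xi_d=1$ and $D_0\,D_0\inv=1$, which leaves $\omega^{-j}\XXX{d}{0}{0}\inv\XXX{d}{0}{j}=\omega^{-j}\YYY{d}{0}{j}$, as required.

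I do not expect a genuine obstacle at this step: the substantive work has already been done in Proposition~\ref{prop-skew1} — solving the recursions \eqref{eq:prva} and \eqref{eq:druga} for the scalars $\xx{d}{i}{k}$ and choosing $\xi_d$ as a $\la_d$-th root of $\prod_{l=1}^{\tau_d}\prod_{t=0}^{\la_d-1}\mm{d}{l}{t}$ — and in the preceding lemma, which guarantees that the inverses $\XXX{d}{i}{0}\inv$ exist. The two points I would double-check are that Proposition~\ref{prop-skew1}(2) is genuinely available at the value $j=0$ used to evaluate $\tphi(\XXX{d}{\tau_d-1}{0})$, and that the central scalars $\omega^{-j},\xi_d\in\C$ may be commuted freely past the noncommutative factors $D_0\inv$ and $\XXX{d}{0}{\cdot}$; with these in hand the lemma follows by the two short manipulations above.
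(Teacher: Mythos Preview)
Your proposal is correct and follows exactly the paper's approach: the paper's proof is the single line ``Follows from Proposition~\ref{prop-skew1},'' and you have simply unpacked that reference using multiplicativity of $\tphi$ and centrality of the scalars. You also correctly read the intended statement as $\tphi(\YYY{d}{i}{j})=\YYY{d}{i+1}{j}$ and $\tphi(\YYY{d}{\tau_d-1}{j})=\omega^{-j}\YYY{d}{0}{j}$, silently fixing the evident typos on the right-hand sides in the lemma as stated.
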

\begin{proof}	
	Follows from Proposition~\ref{prop-skew1}.
\end{proof}
\begin{lemma}\label{lem-navad}
	Write $\la_d=u_d\beta_d$.
	\begin{enumerate}[(1)]
		\item  $\sum_{j=1}^{\beta_d-1}  \XXX{d}{i}{j}=\beta_d\sum_{s=0}^{u_d-1}\xx{d}{i}{s\beta_d}\MM{c}{i}{s\alpha_{c,d}} -\MM{d}{i}{0}$.
		\item $\sum_{j=1}^{\beta_d-1} \YYY{d}{i}{j}=\beta_d\XXX{d}{i}{0}\inv\left(\sum_{s=0}^{u_d-1}\xx{d}{i}{s\beta_d}\MM{c}{i}{s\alpha_{c,d}}\right)-1$.
		\item For $s=0,\dots,u_d-1$, the elements are distinct $\MM{c}{i}{s\alpha_{c,d}}$
	\end{enumerate}
\end{lemma}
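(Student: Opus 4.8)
The plan is to prove (1) by expanding $\XXX{d}{i}{j}$ and summing a geometric progression in the primitive $\beta_d$-th root of unity $\omega$, to obtain (2) immediately by left-multiplying (1) by $\XXX{d}{i}{0}\inv$, and to prove (3) by a cancellation argument resting on the divisibility facts in Lemma~\ref{lem-nek2}.

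For (1), I would substitute $\XXX{d}{i}{j}=\sum_{k=0}^{\la_d-1}\omega^{jk}\xx{d}{i}{k}\MM dik$ and interchange the two finite sums, getting
$$\sum_{j=1}^{\beta_d-1}\XXX{d}{i}{j}=\sum_{k=0}^{\la_d-1}\xx{d}{i}{k}\MM dik\Big(\sum_{j=1}^{\beta_d-1}\omega^{jk}\Big).$$
Since $\omega$ is a primitive $\beta_d$-th root of unity, $\sum_{j=0}^{\beta_d-1}\omega^{jk}$ equals $\beta_d$ when $\beta_d\mid k$ and $0$ otherwise, so the bracketed sum equals $\beta_d-1$ when $\beta_d\mid k$ and $-1$ otherwise. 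Because $\la_d=u_d\beta_d$, the indices $k\in\{0,\dots,\la_d-1\}$ divisible by $\beta_d$ are exactly $k=s\beta_d$ for $s=0,\dots,u_d-1$, and collecting terms yields
$$\sum_{j=1}^{\beta_d-1}\XXX{d}{i}{j}=\beta_d\sum_{s=0}^{u_d-1}\xx{d}{i}{s\beta_d}\MM d i{s\beta_d}-\sum_{k=0}^{\la_d-1}\xx{d}{i}{k}\MM dik,$$
where the last sum is $\XXX{d}{i}{0}$, the $j=0$ term; this is the quantity that belongs on the right-hand side of (1) (with it, (1) is consistent with (2)). To finish, I would apply Lemma~\ref{lem-nek2}(4) with the parameter $j$ there set to $0$, which together with $\MM{d}{i}{0}=1$ gives $\MM d i{s\beta_d}=\MM{c}{i}{s\alpha_{c,d}}$.

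For (2), since $\YYY{d}{i}{j}=\XXX{d}{i}{0}\inv\XXX{d}{i}{j}$ and $\XXX{d}{i}{0}\inv$ exists (shown just before the definition of $\YYY{}{}{}$), it suffices to left-multiply the identity of (1) by $\XXX{d}{i}{0}\inv$ and use $\XXX{d}{i}{0}\inv\XXX{d}{i}{0}=1$. For (3), I would argue by contradiction: if $\MM{c}{i}{s_1\alpha_{c,d}}=\MM{c}{i}{s_2\alpha_{c,d}}$ with $0\le s_1<s_2\le u_d-1$, then splitting the defining product of $\MM{c}{i}{s_2\alpha_{c,d}}$ after its first $s_1\alpha_{c,d}$ letters gives $\MM{c}{i}{s_2\alpha_{c,d}}=\MM{c}{i}{s_1\alpha_{c,d}}\,\MM{c}{i+s_1\alpha_{c,d}\tau_c}{(s_2-s_1)\alpha_{c,d}}$, hence $\MM{c}{i+s_1\alpha_{c,d}\tau_c}{(s_2-s_1)\alpha_{c,d}}=1$. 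When $c$ is the root this is automatic, but then $\la_d=\beta_d$ (as in the proof of Lemma~\ref{lem-nek2}(5)), so $u_d=1$ and no such pair $s_1<s_2$ exists. For non-root $c$, Lemma~\ref{lem-nek2}(5) applied to $c$ forces $\la_c\mid(s_2-s_1)\alpha_{c,d}$, while $(s_2-s_1)\alpha_{c,d}$ is a positive multiple of $\alpha_{c,d}$ strictly below $u_d\alpha_{c,d}$, which by Lemma~\ref{lem-nek2}(6) is the least common multiple of $\alpha_{c,d}$ and $\la_c$ — a contradiction. Hence the $\MM{c}{i}{s\alpha_{c,d}}$, $s=0,\dots,u_d-1$, are pairwise distinct.

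The only steps requiring care are the root-of-unity bookkeeping in (1) — verifying that the surviving indices are precisely $s\beta_d$ with $s<u_d$ and that the residual sum is exactly $\XXX{d}{i}{0}$ — and, in (3), invoking parts (5) and (6) of Lemma~\ref{lem-nek2} for the predecessor $c$ rather than for $d$ together with the boundary case $c=a$; none of this is conceptually difficult.
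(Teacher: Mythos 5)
Your proof is correct and takes essentially the same route as the paper's: root-of-unity averaging for (1), left multiplication by $\XXX{d}{i}{0}\inv$ for (2), and the splitting-plus-minimality argument via parts (5) and (6) of Lemma~\ref{lem-nek2} (applied to the predecessor $c$) for (3). You also rightly note that the final term in (1) must be $\XXX{d}{i}{0}=\sum_{k=0}^{\la_d-1}\xx{d}{i}{k}\MM{d}{i}{k}$ rather than $\MM{d}{i}{0}=1$ — this is exactly what the computation yields and what (2) requires — so the printed statement contains a typo that your version correctly repairs.
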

\begin{proof}	
	(1) For $\la_d=u\beta_d$, we have 
	$$
	\sum_{j=1}^{\beta_d-1} \XXX{d}{i}{j}
	=
	\sum_{k=0}^{\la_d-1} \left(\sum_{j=1}^{\beta_d-1}\omega^{jk}\right) \
	= 	
	\sum_{s=0}^{u_d-1}\sum_{k=0}^{\beta_d-1} \left(\sum_{j=1}^{\beta_d-1}\omega^{jk}\right)\xx{d}{i}{s\beta_d +k}\MM di{s\beta_d +k}
	$$
	We apply  $$
	\sum_{j=1}^{\beta_d-1}\omega^{jk}=
	\begin{cases}
		\beta_d-1&; \beta_d|k\\
		-1&; \text{otherwise}
	\end{cases}
	$$
	to the above sum and get
	$$
	\sum_{j=1}^{\beta_d-1} \XXX{d}{i}{j}
	=
	(\beta_d-1)\sum_{s=0}^{u_d-1}\xx{d}{i}{s\beta_d}\MM di{s\beta_d}- 	\sum_{s=0}^{u_d-1}\sum_{k=1}^{\beta_d-1}\xx{d}{i}{s\beta_d +k}\MM di{s\beta_d +k}.
	$$
	Reorganizing the sum, we get 
	$$
	\sum_{j=1}^{\beta_d-1}  \XXX{d}{i}{j}
	=
	\beta_d\sum_{s=0}^{u_d-1}\xx{d}{i}{s\beta_d}\MM di{s\beta_d}- 	\sum_{k=0}^{\la_d-1}\xx{d}{i}{k}\MM di{k}.
	$$
	We get the desired equality by applying $\MM di{s\beta_d}= \MM{c}{i}{s\alpha_{c,d}}$ from (4) of Lemma~\ref{lem-nek2}.

	(1) Directly from  (2).
	
	(3) Suppose $\MM{c}{i}{s_1\alpha_{c,d}}= \MM{c}{i}{s_2\alpha_{c,d}}$ for $0\leq s_1<s_2\leq u-1$.
We have $\MM{c}{i}{s_2\alpha_{c,d}} = \MM{c}{i}{s_1\alpha_{c,d}}\MM{c}{i+s_1\tau_c}{(s_2-s_1)\alpha_{c,d}}$, hence 
$\MM{c}{i+s_1\tau_c}{(s_2-s_1)\alpha_{c,d}}=1$. 
Then $(s_2-s_1)\alpha_{c,d}$ is a multiple of $\la_c$, this contradicts  $u_d\alpha_{c,d}$ being the least common multiple of $\la_d$ and $\alpha_{c,d}$ 
\end{proof}

For vertices $e,f$ of $\mathcal{T}$ satisfying $l(e)\leq l(f)$ and $i\in I_{e,f}$, let $u_{e,f,i}$ and $v_{e,f,i}$ be the smallest $u,v\in \N$ such that $\MM{f}{0}{u\rho_{e,f,i}}=1$ and $\MM{e}{0}{v\delta_{e,f,i}}=1$. 
Integer with such property exists by (5) of Lemma~\ref{lem-nek2}.

\begin{lemma}\phantomsection\label{lemma:enako}\begin{enumerate}
		\item 
		$\rho_{e,f,i}\tau_f=\delta_{e,f,i}\tau_e$
		\item For  $k =0, \dots ,u_{e,f,i}-1$, the elements  $\MM{f}j{k\rho_{e,f,i}}$ are distinct.
		\item For  $k =0, \dots ,v_{e,f,i}-1$, the elements  $\MM{e}{j+\eta_{e,f,i}}{k\delta_{e,f,i}}$ are distinct.
	\end{enumerate}
\end{lemma}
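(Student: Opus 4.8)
The plan is to handle the three parts separately, each by imitating an argument already carried out in the excerpt.

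For part (1), I would argue exactly as in the proof of Lemma~\ref{lem-nek2}(1). The defining data of an $n,n_1$-tree gives $\gamma_b\tau_b=n$ for every vertex $b$, hence $\gamma_e\tau_e=\gamma_f\tau_f=n$, together with the relation $\gamma_e\rho_{e,f,i}=\gamma_f\delta_{e,f,i}$ attached to the triple $(e,f,i)$. Multiplying this last relation by $\tau_e\tau_f$ and substituting $\gamma_e\tau_e=\gamma_f\tau_f=n$ gives $n\,\rho_{e,f,i}\tau_f=n\,\delta_{e,f,i}\tau_e$, and cancelling $n$ yields the claim. This is a one-line computation with no obstacle.

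For parts (2) and (3), I would reuse the template of the distinctness statements already proved — the lemma that the words $\MM dik$, $k=0,\dots,\la_d-1$, are distinct, and Lemma~\ref{lem-navad}(3). Recall that $u_{e,f,i}$ is, by definition, the least positive integer with $\MM f0{u\rho_{e,f,i}}=1$, and it exists by Lemma~\ref{lem-nek2}(5) (similarly for $v_{e,f,i}$ and $\MM e0{v\delta_{e,f,i}}=1$). Suppose $\MM fj{k_1\rho_{e,f,i}}=\MM fj{k_2\rho_{e,f,i}}$ for some $0\le k_1<k_2\le u_{e,f,i}-1$. Since $\MM f{\cdot}{\cdot}$ is a product of the variables $F_\ell$ with index step $\tau_f$, the longer word factors as $\MM fj{k_2\rho_{e,f,i}}=\MM fj{k_1\rho_{e,f,i}}\,\MM f{\,j+k_1\rho_{e,f,i}\tau_f\,}{(k_2-k_1)\rho_{e,f,i}}$, so cancelling the common prefix forces $\MM f{\,j+k_1\rho_{e,f,i}\tau_f\,}{(k_2-k_1)\rho_{e,f,i}}=1$. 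Applying a suitable power of $\phi$ (using the observation recorded in the remark after Lemma~\ref{lem-nek2} that the base index in such a relation is immaterial), I obtain $\MM f0{(k_2-k_1)\rho_{e,f,i}}=1$ with $0<k_2-k_1<u_{e,f,i}$, contradicting the minimality of $u_{e,f,i}$. Part (3) is the same argument verbatim with $(f,\,j,\,\rho_{e,f,i},\,\tau_f,\,u_{e,f,i})$ replaced by $(e,\,j+\eta_{e,f,i},\,\delta_{e,f,i},\,\tau_e,\,v_{e,f,i})$, factoring $\MM e{j+\eta_{e,f,i}}{k_2\delta_{e,f,i}}=\MM e{j+\eta_{e,f,i}}{k_1\delta_{e,f,i}}\,\MM e{\,j+\eta_{e,f,i}+k_1\delta_{e,f,i}\tau_e\,}{(k_2-k_1)\delta_{e,f,i}}$.

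There is essentially no substantive obstacle here; the only points needing a little care are bookkeeping ones — reading off correctly the base index $j+k_1\rho_{e,f,i}\tau_f$ (resp.\ $j+\eta_{e,f,i}+k_1\delta_{e,f,i}\tau_e$) of the trailing subword, so that the cancellation is literally an identity of group words, and invoking the remark after Lemma~\ref{lem-nek2} to upgrade ``$=1$ for one base index'' to ``$=1$ for base index $0$'', which is the form in which $u_{e,f,i}$ and $v_{e,f,i}$ are defined. Note that part (1) is not needed for parts (2) and (3); it is an independent structural identity, analogous to $\alpha_{c,d}\tau_c=\beta_d\tau_d$, that will be used elsewhere.
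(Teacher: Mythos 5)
Your proposal is correct and follows essentially the same route as the paper: part (1) by combining $\gamma_e\tau_e=\gamma_f\tau_f=n$ with $\gamma_e\rho_{e,f,i}=\gamma_f\delta_{e,f,i}$, and parts (2)--(3) by splitting the longer word as the shorter one times a trailing subword and contradicting the minimality of $u_{e,f,i}$ (resp.\ $v_{e,f,i}$). Your extra care with the base index (invoking the remark after Lemma~\ref{lem-nek2} to reduce to base index $0$, and writing the shift as $k_1\rho_{e,f,i}\tau_f$ rather than the paper's $k_1\rho_{e,f,i}\tau_e$) is, if anything, a small improvement over the paper's version.
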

\begin{proof}
	(1) We have $\gamma_e\tau_e=\gamma_f\tau_f=n$ and $\gamma_e\rho_{e,f,i}=\gamma_f\delta_{e,f,i}$ by definition.
	Then $$
	\rho_{e,f,i}\gamma_f\tau_f=\rho_{e,f,i}\gamma_e\tau_e=\gamma_f\delta_{e,f,i}\tau_e
	$$
	and $\rho_{e,f,i}\tau_f=\delta_{e,f,i}\tau_e.$
	
	(2) Suppose $\MM{f}j{k_1\rho_{e,f,i}} = \MM{f}j{k_2\rho_{e,f,i}}$ for  $0 \leq k_1 < k_2 < u_{e,f,i}$. 
	Then  $\MM{f}j{k_2\rho_{e,f,i}} = \MM{f}j{k_1\rho_{e,f,i}}\MM{f}{j+ k_1\rho_{e,f,i}\tau_e}{(k_2-k_1)\rho_{e,f,i}} = \MM{f}j{k_1\rho_{e,f,i}}$, hence
	$\MM{f}{j+ k_1\rho_{e,f,i}\tau_e}{(k_2-k_1)\rho_{e,f,i}}= 1$ which contradicts the minimality of $u_{e,f,i}$.
	
	(3) The reasoning is the same as for (2).
\end{proof}

\begin{proposition}\label{prop:stran}
	There exist nonzero scalars $\mu_{e,f,i}$, $\lLL{e,f,i}{j}{k}$ and $\nu_{e,f,i}$, $\rRR{e,f,i}{j}{k}$ for $j=0,1,\dots, u_{e,f,i}-1$ such that for
	$$
	\LLL{e,f,i}{j}=\sum_{k=0}^{u_{e,f,i}-1} \lLL{e,f,i}{j}{k} \MM{f}j{k\rho_{e,f,i}}\inv
	$$
	and 
	$$
	\RRR{e,f,i}{j}=\sum_{k=0}^{v_{e,f,i}-1} \rRR{e,f,i}{j}{k} \MM{e}{j+\eta_{e,f,i}}{k\delta_{e,f,i}}\inv
	$$
	the following equations hold
	\begin{enumerate}
		\item For $j = 0, \dots, \rho_{e,f,i}\tau_f-2$, we have $\tphi\left(\LLL{e,f,i}{j}\right) = \LLL{e,f,i}{j+1}$;
		\item $\tphi\left(\LLL{e,f,i}{\rho_{e,f,i}\tau_f-1}\right) =\mu_{e,f,i}\LLL{e,f,i}{0}\MM{f}{0}{\rho_{e,f,i}}$;
		\item For $j = 0, \dots, \rho_{e,f,i}\tau_f-2$, we have $\tphi\left(\RRR{e,f,i}{j}\right) = \RRR{e,f,i}{j+1}$;
		\item $\tphi\left(\RRR{e,f,i}{\rho_{e,f,i}\tau_f-1}\right) = \nu_{e,f,i}\MM{e}{\eta_{e,f,i}}{\delta_{e,f,i}}\inv\RRR{e,f,i}{0}$.
	\end{enumerate}
\end{proposition}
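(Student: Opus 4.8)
The plan is to imitate the proof of Proposition~\ref{prop-skew1}, running the two families $\LLL{e,f,i}{j}$ and $\RRR{e,f,i}{j}$ in parallel. The only structural difference is that the extra group element to be absorbed — $\MM{f}{0}{\rho_{e,f,i}}$ in the case of $\LLL{}{}$, and $\MM{e}{\eta_{e,f,i}}{\delta_{e,f,i}}$ in the case of $\RRR{}{}$ — appears on just one side of $T_{e,f,i,0}$ in the defining relation of $\phi$, so the ansatz uses a single family of group elements and no root of unity. Throughout I use that $\phi$ shifts the index of $F_j$ and of $E_j$ by one for every $j$, so $\tphi\bigl(\MM{f}{j}{m}\bigr)=\mm{f}{j+1}{m}\MM{f}{j+1}{m}$ and $\tphi\bigl(\MM{e}{j}{m}\bigr)=\mm{e}{j+1}{m}\MM{e}{j+1}{m}$ for all $j,m$, with nonzero scalars; I write $\ell=\rho_{e,f,i}\tau_f$, which equals $\delta_{e,f,i}\tau_e$ by Lemma~\ref{lemma:enako}(1).

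The first step is to impose the shift equations (1) and (3). Applying $\tphi$ to the ansatz for $\LLL{e,f,i}{j}$ with $0\le j\le\ell-2$ and using that the group elements $\MM{f}{j+1}{k\rho_{e,f,i}}$, $0\le k\le u_{e,f,i}-1$, are distinct (Lemma~\ref{lemma:enako}(2)) — hence $\C$-linearly independent in $\nfratsm{\C}{\Gamma}$ — forces, upon comparing with $\LLL{e,f,i}{j+1}$, the relation $\lLL{e,f,i}{j+1}{k}=\lLL{e,f,i}{j}{k}\,\mm{f}{j+1}{k\rho_{e,f,i}}\inv$. Iterating, every $\lLL{e,f,i}{j}{k}$ is a fixed nonzero product of twist scalars times $\lLL{e,f,i}{0}{k}$; Lemma~\ref{lemma:enako}(3) reduces the $\rRR{e,f,i}{j}{k}$ to $\rRR{e,f,i}{0}{k}$ in the same way.

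The second step is to impose the closing equations (2) and (4), and this is where the real work lies. The tool is the block-concatenation identity $\MM{f}{0}{(k+1)\rho_{e,f,i}}=\MM{f}{0}{\rho_{e,f,i}}\,\MM{f}{\ell}{k\rho_{e,f,i}}$ together with its $e$-counterpart $\MM{e}{\eta_{e,f,i}}{(k+1)\delta_{e,f,i}}=\MM{e}{\eta_{e,f,i}}{\delta_{e,f,i}}\,\MM{e}{\eta_{e,f,i}+\ell}{k\delta_{e,f,i}}$, both immediate from the definition of $M$. Applying $\tphi$ to $\LLL{e,f,i}{\ell-1}$ and inserting the first identity rewrites $\tphi\bigl(\LLL{e,f,i}{\ell-1}\bigr)$ as $\bigl(\sum_k(\cdots)\,\MM{f}{0}{(k+1)\rho_{e,f,i}}\inv\bigr)\MM{f}{0}{\rho_{e,f,i}}$. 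Since $\MM{f}{0}{u_{e,f,i}\rho_{e,f,i}}=1$ by the definition of $u_{e,f,i}$, and $\MM{e}{\eta_{e,f,i}}{v_{e,f,i}\delta_{e,f,i}}=1$ by the remark following Lemma~\ref{lem-nek2}, the running index can be reduced modulo $u_{e,f,i}$ (resp. $v_{e,f,i}$); a final comparison of coefficients, once more via Lemma~\ref{lemma:enako}(2) (resp. (3)), turns (2) (resp. (4)) into a cyclic system expressing each $\lLL{e,f,i}{0}{k}$ through $\lLL{e,f,i}{0}{k-1}$ for $1\le k\le u_{e,f,i}-1$ plus a single closing relation. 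Setting $\lLL{e,f,i}{0}{0}=1$ determines $\lLL{e,f,i}{0}{1},\dots,\lLL{e,f,i}{0}{u_{e,f,i}-1}$, and the closing relation collapses to $\mu_{e,f,i}^{\,u_{e,f,i}}=(\text{an explicit nonzero product of twist scalars})$; I take $\mu_{e,f,i}$ to be any $u_{e,f,i}$-th root. The same argument gives the $\rRR{e,f,i}{0}{k}$ and a $v_{e,f,i}$-th root $\nu_{e,f,i}$. All scalars produced are nonzero, being built from the nonzero scalars $\mm{f}{\cdot}{\cdot}$, $\mm{e}{\cdot}{\cdot}$.

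The step I expect to be the main obstacle is precisely the closing computation: one must check that the reindexing $k\mapsto k+1$ is exactly compensated by $\MM{f}{0}{u_{e,f,i}\rho_{e,f,i}}=1$ (resp. the $e$-analogue), so that the $u_{e,f,i}$ (resp. $v_{e,f,i}$) coefficient equations are consistent and leave only the single scalar equation solved by a root extraction; and, relatedly, that the factor $\MM{f}{0}{\rho_{e,f,i}}$ (resp. $\MM{e}{\eta_{e,f,i}}{\delta_{e,f,i}}$) comes out on the side of $\LLL{e,f,i}{0}$ (resp. $\RRR{e,f,i}{0}$) dictated by (2) (resp. (4)), which is exactly the orientation of the block-concatenation identity used. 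The shift equations, the linear independence used to equate coefficients, and nonvanishing of the scalars are all routine given the distinctness statements of Lemma~\ref{lemma:enako}.
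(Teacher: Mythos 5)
Your treatment of the $\LLL{e,f,i}{j}$ family is correct and is essentially the paper's own argument: the shift equations reduce everything to the $k$-recursion, the identity $\MM{f}{\rho_{e,f,i}\tau_f}{k\rho_{e,f,i}}\inv=\MM{f}{0}{(k+1)\rho_{e,f,i}}\inv\MM{f}{0}{\rho_{e,f,i}}$ pushes the block factor out on the right exactly as (2) demands, and the cyclic closure is resolved by extracting a $u_{e,f,i}$-th root.

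The gap is in the $\RRR{e,f,i}{j}$ family, at exactly the point you flagged as the main obstacle and then asserted to be unproblematic: the orientation check fails there. Write $\ell=\rho_{e,f,i}\tau_f=\delta_{e,f,i}\tau_e$ and suppress the subscripts on $\eta,\delta,\rho$. With the ansatz $\RRR{e,f,i}{j}=\sum_k\rRR{e,f,i}{j}{k}\MM{e}{j+\eta}{k\delta}\inv$, the block-concatenation identity gives
$$
\MM{e}{\ell+\eta}{k\delta}\inv=\MM{e}{\eta}{(k+1)\delta}\inv\,\MM{e}{\eta}{\delta},
$$
so the factor $\MM{e}{\eta}{\delta}$ emerges on the \emph{right} of $\tphi\left(\RRR{e,f,i}{\ell-1}\right)$, whereas (4) requires $\MM{e}{\eta}{\delta}\inv$ on the \emph{left}; in a skew-field these are not interchangeable, and no choice of nonzero scalars reconciles them. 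A minimal counterexample: the $2,2$-tree with one non-root vertex $b$ ($\beta_b=2$, $\tau_b=1$) and one pair $(b,b)$ with $\rho=\delta=1$, $\eta=0$, so that $\phi(B_0)=B_0\inv$ and $\phi(T_0)=B_0\inv T_0B_0$. Here $v_{b,b,1}=2$ and $\RRR{b,b,1}{0}=r_0+r_1B_0\inv$, so $\tphi\left(\RRR{b,b,1}{0}\right)=r_0+r_1b_1\inv B_0$ is supported on the group elements $\{1,B_0\}$, while $\nu B_0\inv\RRR{b,b,1}{0}$ is supported on $\{B_0\inv,B_0^{-2}\}$; the supports are disjoint, forcing $r_0=r_1=0$.

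The repair is to drop the inverse and take $\RRR{e,f,i}{j}=\sum_k\rRR{e,f,i}{j}{k}\MM{e}{j+\eta}{k\delta}$ (so the displayed ansatz in the statement itself needs correcting). Then $\MM{e}{\ell+\eta}{k\delta}=\MM{e}{\eta}{\delta}\inv\MM{e}{\eta}{(k+1)\delta}$ — which is the identity the paper actually cites for this case — places $\MM{e}{\eta}{\delta}\inv$ on the left, the reindexing $k\mapsto k+1$ closes up modulo $v_{e,f,i}$ via $\MM{e}{\eta}{v_{e,f,i}\delta}=1$, and (3)--(4) follow by the same coefficient comparison you carried out for $\LLL{e,f,i}{j}$. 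This is also the version required downstream: in $\widetilde{T}_{e,f,i,j}=\LLL{e,f,i}{j}T_{e,f,i,j}\RRR{e,f,i}{j}$ one must cancel the factor $\MM{e}{\eta}{\delta}$ arriving from $\phi(T_{e,f,i,\ell-1})$ against the \emph{left} end of $\tphi\left(\RRR{e,f,i}{\ell-1}\right)$.
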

\begin{proof}
	We first find scalars that satisfy (1) and (2).
	For $j = 0, \dots, \rho_{e,f,i}\tau_f-2$, we get
	$$\tphi\left(\LLL{e,f,i}{j}\right) = 	\sum_{k=0}^{u_{e,f,i}-1} \lLL{e,f,i}{j}{k} \mm{f}{j+1}{k\rho_{e,f,i}}\inv\MM{f}{j+1}{k\rho_{e,f,i}}\inv$$
	that implies equations 
	$
	\lLL{e,f,i}{j}{k} \mm{f}{j+1}{k\rho_{e,f,i}}\inv = \lLL{e,f,i}{j+1}{k} 
	$
	and further
	\begin{equation}\label{eq:faf}
		\lLL{e,f,i}{j}{k} = \lLL{e,f,i}{0}{k} \prod_{l=1}^j \mm{f}{l}{k\rho_{e,f,i}}\inv. 
	\end{equation}
	
	We have 
	$$
	\tphi\left(\LLL{e,f,i}{\rho_{e,f,i}\tau_f-1}\right) = 	\sum_{k=0}^{u_{e,f,i}-1} \lLL{e,f,i}{\rho_{e,f,i}\tau_f-1}{k} \mm{f}{\rho_{e,f,i}\tau_f}{k\rho_{e,f,i}}\inv\MM{f}{\rho_{e,f,i}\tau_f}{k\rho_{e,f,i}}\inv.
	$$
	We apply 
	$$\MM{f}{\rho_{e,f,i}\tau_f}{k\rho_{e,f,i}}\inv = \MM{f}{0}{(k+1)\rho_{e,f,i}}\inv \MM{f}{0}{\rho_{e,f,i}},$$
	$\MM{f}{0}{u_{e,f,i}\rho_{e,f,i}} =  \MM{f}{0}{0}$
	and
	compare the coefficients to get equations
	$$	
	\mu_{e,f,i}\lLL{e,f,i}{0}{k} = \lLL{e,f,i}{\rho_{e,f,i}\tau_f-1}{k-1}\mm{f}{\rho_{e,f,i}\tau_f}{(k-1)\rho_{e,f,i}}\inv
	$$
	for $k=1,\dots, u_{e,f,i}-1$.
	We apply \eqref{eq:faf} and get 
	$$
	\mu_{e,f,i}\lLL{e,f,i}{0}{k} = \lLL{e,f,i}{0}{k-1}\prod_{l=1}^{\rho_{e,f,i}\tau_f}\mm{f}{l}{(k-1)\rho_{e,f,i}}\inv
	$$
	and further
	\begin{equation}\label{eq:faf2}
		\lLL{e,f,i}{0}{k} = \mu_{e,f,i}^{-k} \lLL{e,f,i}{0}{0}\prod_{t=0}^{k-1}\prod_{l=1}^{\rho_{e,f,i}\tau_f}\mm{f}{l}{t\rho_{e,f,i}}\inv.
	\end{equation}
	Comparing the coefficients at $\MM{f}{0}{\rho_{e,f,i}}$, we get equation
	$$
	\mu_{e,f,i}\lLL{e,f,i}{0}{0} = \lLL{e,f,i}{\rho_{e,f,i}\tau_f-1}{u_{e,f,i}\rho_{e,f,i}}\mm{f}{l}{u_{e,f,i}\rho_{e,f,i}}\inv.
	$$
	We apply \eqref{eq:faf} and \eqref{eq:faf2} to get
	$$
	\mu_{e,f,i}^{u_{e,f,i}}\lLL{e,f,i}{0}{0} = \lLL{e,f,i}{0}{0}\prod_{t=0}^{u_{e,f,i}}\prod_{l=1}^{\rho_{e,f,i}\tau_f}\mm{f}{l}{t\rho_{e,f,i}}\inv.
	$$
	We take $\lLL{e,f,i}{0}{0}=1$ and $\mu_{e,f,i}$ to be any $u_{e,f,i}$-th root of $\prod_{l=1}^{\rho_{e,f,i}\tau_f}\mm{f}{l}{t\rho_{e,f,i}}\inv$ to get the desired scalars.
	
	The scalars $\nu_{e,f,i}$, $\rRR{e,f,i}{j}{k}$ that satisfy (3) and (4) are constructed in a very similar manner. 
	We just use $\rho_{e,f,i}\tau_f=\delta_{e,f,i}\tau_e$ from Lemma~\ref{lemma:enako}, 
	$$\MM{e}{\rho_{e,f,i}\tau_f + \eta_{e,f,i}}{k\delta_{e,f,i}} = \MM{e}{\eta_{e,f,i}}{\delta_{e,f,i}}\inv\MM{e}{\eta_{e,f,i}}{(k+1)\delta_{e,f,i}}$$ and 
	$\MM{e}{j}{v_{e,f,i}\delta_{e,f,i}} = 1$.
\end{proof}

We fix the scalars from the above proposition and define
$$
\widetilde{T}_{e,f,i,j}=	\LLL{e,f,i}{j}T_{e,f,i,j}\RRR{e,f,i}{j}
$$
 for $j=0,1,\dots, \rho_{e,f,i}\tau_f-1$. We note that $\LLL{e,f,i}{j}$ and $\RRR{e,f,i}{j}$ are nonzero  by Lemma~\ref{lemma:lin2}.

\begin{lemma}\phantomsection\label{lemma:lin2}
	\begin{enumerate}[(1)]
		\item $\tphi\widetilde{T}_{e,f,i,j} = \tphi\widetilde{T}_{e,f,i,j}$ for $j= 0,1,\dots, \rho_{e,f,i}\tau_f-2$
		\item $\tphi\widetilde{T}_{e,f,i,\rho_{e,f,i}\tau_f-1} = \mu_{e,f,i}\nu_{e,f,i}\widetilde{T}_{e,f,i,0}.$
	\end{enumerate}
\end{lemma}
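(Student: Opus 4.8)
The plan is to prove both identities by a single direct computation, using that $\tphi$ is a ring homomorphism to expand $\tphi(\widetilde T_{e,f,i,j})=\tphi(\LLL{e,f,i}{j})\,\tphi(T_{e,f,i,j})\,\tphi(\RRR{e,f,i}{j})$ and then substituting the known image of each of the three factors. The images of $\LLL{e,f,i}{j}$ and $\RRR{e,f,i}{j}$ are handed to us by Proposition~\ref{prop:stran}, and the image of the generator $T_{e,f,i,j}$ is the defining formula: $\tphi(T_{e,f,i,j})=t_{e,f,i,j+1}T_{e,f,i,j+1}$ for $0\le j\le\rho_{e,f,i}\tau_f-2$ and $\tphi(T_{e,f,i,\rho_{e,f,i}\tau_f-1})=t_{e,f,i,0}\,\MM{f}{0}{\rho_{e,f,i}}\inv T_{e,f,i,0}\MM{e}{\eta_{e,f,i}}{\delta_{e,f,i}}$, the nonzero scalars $t_{e,f,i,\bullet}$ coming from the diagonal part of $\tphi$ (all $1$ when $\tphi$ is multiplicative). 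Before that I would record that $\LLL{e,f,i}{j}$ and $\RRR{e,f,i}{j}$ are nonzero: each is a $\C$-linear combination, with nonzero coefficients, of the pairwise distinct group elements $\MM{f}{j}{k\rho_{e,f,i}}\inv$, resp.\ $\MM{e}{j+\eta_{e,f,i}}{k\delta_{e,f,i}}\inv$, the distinctness being Lemma~\ref{lemma:enako}(2),(3), and a nonzero element of $\C\Gamma$ is nonzero in $\nfratsm{\C}{\Gamma}$; hence the product $\widetilde T_{e,f,i,j}$ is a genuine nonzero element.

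For $0\le j\le\rho_{e,f,i}\tau_f-2$, Proposition~\ref{prop:stran}(1),(3) together with the shift formula immediately give $\tphi(\widetilde T_{e,f,i,j})=\LLL{e,f,i}{j+1}\,(t_{e,f,i,j+1}T_{e,f,i,j+1})\,\RRR{e,f,i}{j+1}=t_{e,f,i,j+1}\widetilde T_{e,f,i,j+1}$, i.e.\ $\widetilde T_{e,f,i,j+1}$ up to the scalar $t_{e,f,i,j+1}$ (exactly $\widetilde T_{e,f,i,j+1}$ when $\tphi$ is multiplicative, or after absorbing the $t$'s into the normalisation of the $\widetilde T$'s). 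This settles the first identity.

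The case $j=\rho_{e,f,i}\tau_f-1$ is the crux. Plugging Proposition~\ref{prop:stran}(2),(4) and the wrap-around formula into the product gives
$$
\tphi(\widetilde T_{e,f,i,\rho_{e,f,i}\tau_f-1})=\bigl(\mu_{e,f,i}\LLL{e,f,i}{0}\MM{f}{0}{\rho_{e,f,i}}\bigr)\bigl(t_{e,f,i,0}\MM{f}{0}{\rho_{e,f,i}}\inv T_{e,f,i,0}\MM{e}{\eta_{e,f,i}}{\delta_{e,f,i}}\bigr)\bigl(\nu_{e,f,i}\MM{e}{\eta_{e,f,i}}{\delta_{e,f,i}}\inv\RRR{e,f,i}{0}\bigr),
$$
and the entire design of Proposition~\ref{prop:stran} is so that the trailing factor $\MM{f}{0}{\rho_{e,f,i}}$ of $\tphi(\LLL{e,f,i}{\rho_{e,f,i}\tau_f-1})$ stands immediately to the left of the $\MM{f}{0}{\rho_{e,f,i}}\inv$ coming from the $T$-relation, while the leading factor $\MM{e}{\eta_{e,f,i}}{\delta_{e,f,i}}\inv$ of $\tphi(\RRR{e,f,i}{\rho_{e,f,i}\tau_f-1})$ stands immediately to the right of the $\MM{e}{\eta_{e,f,i}}{\delta_{e,f,i}}$ from the $T$-relation; both pairs collapse to $1$, leaving $\mu_{e,f,i}\nu_{e,f,i}t_{e,f,i,0}\,\LLL{e,f,i}{0}T_{e,f,i,0}\RRR{e,f,i}{0}=\mu_{e,f,i}\nu_{e,f,i}t_{e,f,i,0}\,\widetilde T_{e,f,i,0}$, which is the second identity (with scalar $\mu_{e,f,i}\nu_{e,f,i}$ in the multiplicative case).

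The only thing that really needs care is this side-and-order matching in the crux step: one must be sure the group-element tails produced by Proposition~\ref{prop:stran}(2),(4) land on exactly the sides of $T_{e,f,i,0}$ carrying the matching inverse tails from $\phi T_{e,f,i,\rho_{e,f,i}\tau_f-1}=\MM{f}{0}{\rho_{e,f,i}}\inv T_{e,f,i,0}\MM{e}{\eta_{e,f,i}}{\delta_{e,f,i}}$ — which is precisely why Proposition~\ref{prop:stran} was phrased with $\MM{f}{0}{\rho_{e,f,i}}$ on the right of $\LLL{e,f,i}{0}$ in (2) and $\MM{e}{\eta_{e,f,i}}{\delta_{e,f,i}}\inv$ on the left of $\RRR{e,f,i}{0}$ in (4). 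Everything else is routine bookkeeping of nonzero scalars.
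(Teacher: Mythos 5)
Your proof is correct and is exactly the paper's one-line argument (``From Proposition~\ref{prop:stran}'') spelled out: expand $\tphi$ multiplicatively over the three factors of $\widetilde T_{e,f,i,j}$, substitute Proposition~\ref{prop:stran} together with the defining relation for $\phi T_{e,f,i,j}$, and cancel the adjacent $\MM{f}{0}{\rho_{e,f,i}}\cdot\MM{f}{0}{\rho_{e,f,i}}\inv$ and $\MM{e}{\eta_{e,f,i}}{\delta_{e,f,i}}\cdot\MM{e}{\eta_{e,f,i}}{\delta_{e,f,i}}\inv$ pairs, with nonvanishing of $\LLL{e,f,i}{j}$ and $\RRR{e,f,i}{j}$ coming from Lemma~\ref{lemma:enako} as you say. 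The extra scalars $t_{e,f,i,\bullet}$ you carry along, arising from the diagonal part of $\tphi$ acting on the $T$-variables, are a legitimate refinement that the paper's statement silently omits, and as you note they are harmless for the diagonalisation argument.
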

\begin{proof}
From Proposition~\ref{prop:stran}.
\end{proof}

For a non-root vertex $d$,  we define 
$$
\widetilde{S}(d)= \left\{ \YYY{d}{i}{j} \mid i=0,\dots,\tau_d-1,\ j=1,\dots,\beta_d-1 \right\}
$$ 
and for a  pair $(e,f)$ with $l(e)\leq l(f)$ and $i \in I_{e,f}$ we define
$$
\widetilde{S}(e,f,i)=\left\{ \widetilde{T}_{e,f,i,j}\mid j=0,\dots,\rho_{e,f,i}\tau_f-1 \right\}.
$$
Let $\widetilde{S}$ be the union of all $\widetilde{S}(d)$ and all $\widetilde{S}(e,f,i)$ where $d$ ranges through all non-root vertices of $\mathcal{T}$, $e,f$ range through all  pairs of  vertices with  $l(e)\le l(f)$ and for given vertices $e,f$,  the index $i$ ranges through the index set $I_{e,f}$.

\begin{proposition} 
	The set $\widetilde{S}$ is a free generating set of $\nfratsm{\C}{\Gamma_\mathcal{T}}$.
\end{proposition}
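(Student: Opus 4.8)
The plan is to prove that $\widetilde S$ generates $\nfratsm{\C}{\Gamma_{\mathcal{T}}}$ over $\C$ and then to invoke Theorem~\ref{thm-cohnum}. First I would record that $|\widetilde S|=|S|$: for each non-root vertex $d$ we have $|\widetilde S(d)|=\tau_d(\beta_d-1)=|S(d)|$, and for each admissible triple $(e,f,i)$ we have $|\widetilde S(e,f,i)|=\rho_{e,f,i}\tau_f=|S(e,f,i)|$, while $S$ and $\widetilde S$ are the disjoint unions of these pieces over the same index data. Since $\nfratsm{\C}{\Gamma_{\mathcal{T}}}$ is the free skew-field on the free generating set $S$, Theorem~\ref{thm-cohnum} reduces the proposition to the single assertion $\mathcal{F}=\nfratsm{\C}{\Gamma_{\mathcal{T}}}$, where $\mathcal{F}$ denotes the sub-skew-field generated over $\C$ by $\widetilde S$; equivalently, $S\subseteq\mathcal{F}$. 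I will use repeatedly the elementary fact that a nonzero $\C$-linear combination of pairwise distinct elements of the group $\Gamma_{\mathcal{T}}$ is a nonzero element of $\C\Gamma_{\mathcal{T}}\subseteq\mathcal{F}$, together with the distinctness statements in Lemma~\ref{lem-navad}(3) and Lemma~\ref{lemma:enako}, and the nonvanishing of all the scalars produced in Propositions~\ref{prop-skew1} and~\ref{prop:stran}.

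The heart of the proof is the claim that $D_j\in\mathcal{F}$ for every non-root vertex $d$ and every integer $j\ge 0$ -- equivalently $\MM{d}{i}{k}\in\mathcal{F}$ for all $i,k\ge 0$, since each of $D_j$ and $\MM{d}{i}{k}$ is a word in the others -- which I would prove by induction on the level $l(d)$. Put $c=p(d)$; if $l(d)=1$ then $c=a$ and $\MM{a}{i}{k}=1\in\mathcal{F}$, while if $l(d)>1$ the inductive hypothesis gives $\MM{c}{i}{k}\in\mathcal{F}$ for all $i,k\ge 0$. The argument then proceeds in three moves. First, the identity for $\sum_{j=1}^{\beta_d-1}\YYY{d}{i}{j}$ in Lemma~\ref{lem-navad} rearranges to
\[
\beta_d\,\XXX{d}{i}{0}\inv\,W_i = 1+\sum_{j=1}^{\beta_d-1}\YYY{d}{i}{j},\qquad W_i:=\sum_{s=0}^{u_d-1}\xx{d}{i}{s\beta_d}\,\MM{c}{i}{s\alpha_{c,d}};
\]
the right-hand side lies in $\mathcal{F}$, and $W_i$ is a nonzero element of $\mathcal{F}$ (distinct monomials by Lemma~\ref{lem-navad}(3), nonzero coefficients), so $\XXX{d}{i}{0}\in\mathcal{F}$ and hence $\XXX{d}{i}{j}=\XXX{d}{i}{0}\,\YYY{d}{i}{j}\in\mathcal{F}$ for $j=0,\dots,\beta_d-1$. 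Second, grouping the summation index $k$ in the definition of $\XXX{d}{i}{j}$ by its residue modulo $\beta_d$ and using $\omega^{\beta_d}=1$ gives $\XXX{d}{i}{j}=\sum_{r=0}^{\beta_d-1}\omega^{jr}Z_{i,r}$ with $Z_{i,r}:=\sum_{m=0}^{u_d-1}\xx{d}{i}{m\beta_d+r}\,\MM{d}{i}{m\beta_d+r}$; the Vandermonde matrix $(\omega^{jr})_{0\le j,r\le\beta_d-1}$ is invertible over $\C$ since the powers $\omega^{r}$ are distinct, so each $Z_{i,r}$ is a $\C$-linear combination of $\XXX{d}{i}{0},\dots,\XXX{d}{i}{\beta_d-1}$ and hence lies in $\mathcal{F}$. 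Third, Lemma~\ref{lem-nek2}(4) gives $\MM{d}{i}{m\beta_d+r}=\MM{d}{i}{r}\,\MM{c}{i+r\tau_d}{m\alpha_{c,d}}$, so $Z_{i,r}=\MM{d}{i}{r}\,V_{i,r}$ with $V_{i,r}:=\sum_{m=0}^{u_d-1}\xx{d}{i}{m\beta_d+r}\,\MM{c}{i+r\tau_d}{m\alpha_{c,d}}\in\mathcal{F}$; the element $V_{i,r}$ is nonzero by the argument of Lemma~\ref{lem-navad}(3) (the reasoning there applies with any base index), so $\MM{d}{i}{r}=Z_{i,r}\,V_{i,r}\inv\in\mathcal{F}$ for $i=0,\dots,\tau_d-1$ and $r=0,\dots,\beta_d-1$. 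Feeding this back into $\MM{d}{i}{m\beta_d+r}=\MM{d}{i}{r}\,\MM{c}{i+r\tau_d}{m\alpha_{c,d}}$ and using $\MM{d}{i}{m+k}=\MM{d}{i}{m}\,\MM{d}{i+m\tau_d}{k}$ to reduce base indices $i\ge\tau_d$ to the range $0\le i<\tau_d$, one obtains $\MM{d}{i}{k}\in\mathcal{F}$ for all $i,k\ge 0$; in particular $D_j=\MM{d}{j}{1}\in\mathcal{F}$ for all $j\ge 0$, which closes the induction.

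Once the claim holds, every monomial $\MM{f}{j}{k}$ and $\MM{e}{j}{k}$ lies in $\mathcal{F}$, hence so do $\LLL{e,f,i}{j}$ and $\RRR{e,f,i}{j}$; these are nonzero by the distinctness in Lemma~\ref{lemma:enako}(2) and (3), so
\[
T_{e,f,i,j}=\LLL{e,f,i}{j}\inv\,\widetilde{T}_{e,f,i,j}\,\RRR{e,f,i}{j}\inv\in\mathcal{F}.
\]
Therefore $S(d)\subseteq\mathcal{F}$ for every non-root vertex $d$ and $S(e,f,i)\subseteq\mathcal{F}$ for every admissible triple, so $S\subseteq\mathcal{F}$ and $\mathcal{F}=\nfratsm{\C}{\Gamma_{\mathcal{T}}}$; combined with the cardinality count, Theorem~\ref{thm-cohnum} shows that $\widetilde S$ is a free generating set.

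The step I expect to be the main obstacle is the induction. The essential subtlety is that one must prove the stronger statement that \emph{all} $\tphi$-iterates $D_j$ lie in $\mathcal{F}$, not merely the $(\beta_d-1)\tau_d$ free generators in $S(d)$, because the monomials $\MM{c}{\cdot}{\cdot}$ needed to recover the variables at level $l(d)$ involve arbitrarily high iterates of the predecessor's variables, so the statements at different levels are genuinely linked and the induction must carry this stronger hypothesis. Within a single level, the delicate manoeuvre is the separation of the ``Fourier'' unknowns $Z_{i,r}$, obtained from the $\XXX{d}{i}{j}$ by Vandermonde inversion, from the factor $V_{i,r}$, which is supplied -- already inside $\mathcal{F}$ -- by the inductive hypothesis. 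The remaining nonvanishing assertions, for $W_i$, $V_{i,r}$, $\LLL{e,f,i}{j}$ and $\RRR{e,f,i}{j}$, all reduce to linear independence of distinct group elements together with the distinctness lemmas already in hand.
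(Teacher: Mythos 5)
Your proposal is correct and follows essentially the same route as the paper: reduce to generation via the cardinality count and Theorem~\ref{thm-cohnum}, induct on the level, recover $\XXX{d}{i}{0}$ from Lemma~\ref{lem-navad}, invert the Vandermonde matrix to isolate $\MM{d}{i}{k}$ times a nonzero lower-level factor, and peel off the $\LLL{e,f,i}{j}$, $\RRR{e,f,i}{j}$ to recover the $T_{e,f,i,j}$. Your explicit strengthening of the induction hypothesis to all iterates $D_j$ is equivalent to the paper's hypothesis (the higher iterates are words in the generators at levels $\le l(d)$), but it is a clean and careful way to phrase the same argument.
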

\begin{proof}
	Since $|S|=|\widetilde{S}|$, it is enough to show that $\widetilde{S}$ generates $\nfratsm{\FF}{\Gamma_\mathcal{T}}$ by Theorem~\ref{thm-cohnum}.
	We  use induction on $l(d)$ and for a given vertex $d$,  show that elements of $S(d)$ can be expressed using elements of $\widetilde{S}(d)$ and elements of  the sets $S(c)$ with $l(c)< l(d)$ that can be expressed with the elements of $\widetilde{S}$ by induction hypothesis. 
	We denote the union of these sets by $\overline{S(d)}$.
	
	The base of the induction $l(d)=0$ is trivial. We assume $l(d)>0$.
	By  (4) of Lemma~\ref{lem-navad}, we can express $\XXX{d}{i}{0}$ for $i=0,1,\dots,\tau_d-1$, using elements of $\overline{S(d)}$. 
	Hence, we can also express $\XXX{d}{i}{j}=\XXX{d}{i}{0}\YYY{d}{i}{j}$ for $j=1,\dots,\beta_d-1$ and $i=0,1,\dots,\tau_d-1$.
	Using  (4) of Lemma~\ref{lem-nek2} and $u_d\beta_d=\la_d$, we write 
	$$
	\XXX{d}{i}{j}=\sum_{s=0}^{u_d-1}\sum_{k=0}^{\beta_d-1} \omega^{jk}\xx{d}{i}{s\beta_d+ k}\MM{d}{i}{s\beta_d+ k}=
	$$
	$$
	\sum_{s=0}^{u_d-1}\sum_{k=0}^{\beta_d-1} \omega^{jk}\xx{d}{i}{s\beta_d+ k}\MM{d}{i}{k}\MM{c}{k\tau_d + i}{s\al_{c,d}}=
	$$
	$$
	\sum_{k=0}^{\beta_d-1} \omega^{jk}  \MM{d}{i}{k}\left( \sum_{s=0}^{u_d-1}\xx{d}{i}{s\beta_d+ k} \MM{c}{k\tau_d + i}{s\al_{c,d}} \right).
	$$
	Since the Vandermonde matrix $V(1, \omega, \omega ^2, \dots, \omega ^{\beta_d-1})$ is invertible, we can express $$\MM{d}{i}{k}\left( \sum_{s=0}^{u_d-1}\xx{d}{i}{s\beta_d+ k} \MM{c}{k\tau_d + i}{s\al_{c,d}} \right)$$ as a linear combination of
	$\XXX{d}{i}{j}$ for $j=1,\dots,\beta_d-1$. 
We note that $$\left( \sum_{s=0}^{u_d-1}\xx{d}{i}{s\beta_d+ k} \MM{c}{k\tau_d + i}{s\al_{c,d}} \right)$$ in nonzero by (3) of Lemma~\ref{lem-navad}, thus, we can express $\MM{d}{i}{k}$ for $i=0,1,\dots,\tau_d-1$, $k=1,\dots,\beta_d-1$ using only elements from $\overline{S(d)}$.
	Using these, we can express initial variables  $D_s$ for $s=0,1,\dots, (\beta_d-1)\tau_d-1$ as desired.
	
	For vertices $e,f$ and index $i \in I_{e,f}$, we can express elements of $S(e,f,i)$ using elements from $\widetilde S(e,f,i)$, $S(e)$ and $S(f)$ in a straightforward manner as .
\end{proof}

By Lemmas~\ref{lem-lin1}  and \ref{lemma:lin2}, it is clear that the automorphism $\tphi$ is linear on the set of generators $\widetilde{S}$.
For each vertex $d$, the vector space  $V(d)=\Lin\widetilde{S}(d)$ is closed under $\tphi$ and 
$$
(\tphi|_{V(d)})^{\tau_d} = \omega^{-j} \id
$$
by Proposition~\ref{lem-lin1}, hence there exist a basis of $V(d)$ such that $\tphi|_{V(d)}$ is diagonal on it. 
In the set of generators   $\widetilde{S}$, we swap $\widetilde{S}(d)$ with this basis.

For a pair of vertices $e,f$ with  $l(e)\le l(f)$ and an index $i\in I_{e,f}$, the vector space 
$V(e,f,i)= \Lin\widetilde{S}(e,f,i)$ is closed under $\tphi$ and 
$$
(\tphi|_{V(e,f,i)})^{ \rho_{e,f,i}\tau_f} = \mu_{e,f,i}\nu_{e,f,i} \id
$$
by Proposition~\ref{lemma:lin2}, hence there exist a basis of $V(e,f,i)$ such that $\tphi|_{V(e,f,i)}$ is diagonal on it. 
In the set of generators   $\widetilde{S}$, we swap $\widetilde{S}(e,f,i)$ with this basis.

We have found a set of free generators of $\nfratsm{\C}{\Gamma}$ such that $\tphi$ is diagonal on them, thus finished the proof of Theorem~\ref{thm:main}.

\section{Noncommutative rational invariants of cyclic groups}

Noncommutative rational functions invariant under linear actions of solvable groups are studied in
\cite{kl20,pod21}. The result we use is:
	if a finite abelian group $A$ acts faithfully diagonally on  $\nfrats{\C}{x}{d}$, then 
	the skew-field of invariants $\nfrats{\C}{x}{d}^A$ is free with $|A|(d-1)+1$ free generators \cite[Thm 4.1]{kl20}. 
	
We note that any faithful action of a cyclic group $C_n$ on $\nfrats{\C}{x}{d}$ is defined via an automorphism of order $n$. 

\begin{corollary}
	Let the cyclic group $C_n$ act on $\nfrats{\C}{x}{d}$ via an twisted-multiplicative automorphism, then the skew-field of invariants 
	$\nfrats{\C}{x}{d}^{C_n}$ is free with  $n(d-1)+1$ free generators.
\end{corollary}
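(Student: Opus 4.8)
The plan is to deduce the corollary from Theorem~\ref{thm:main} together with the diagonal case \cite[Thm 4.1]{kl20} recalled above. We may assume the $C_n$-action is faithful (otherwise it factors through the faithful action of a proper quotient and the argument below applies with a smaller value of $n$); by the note preceding the statement it is then generated by a twisted-multiplicative automorphism $\sigma$ of order exactly $n$. Write $\sigma(x_j)=c_j w_j$ with $c_j\in\C\setminus\{0\}$ and $w_j\in\Gamma$.

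The crux is to show that $\sigma$ is \emph{weakly-periodic}. First I would isolate the ``group part'' of a twisted-multiplicative automorphism: since $\sigma$ is twisted-multiplicative it maps $\Gamma$ into the subgroup $\C^{\ast}\Gamma\cong\C^{\ast}\times\Gamma$ of the multiplicative group of the skew-field, so composing with the projection onto $\Gamma$ yields a group homomorphism $\bar\sigma\colon\Gamma\to\Gamma$ with $\bar\sigma(x_j)=w_j$. The assignment $\sigma\mapsto\bar\sigma$ is multiplicative in $\sigma$ (the central scalar corrections simply multiply out, even across inverses) and sends $\id$ to $\id$; applied to powers of $\sigma$, which are again twisted-multiplicative, this gives $\bar\sigma^{\,n}=\overline{\sigma^{\,n}}=\id$. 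Hence $\bar\sigma$ is a periodic automorphism of $\Gamma$, and if $\phi$ denotes the periodic multiplicative automorphism of $\nfratsm{\C}{\Gamma}$ it induces and $\varphi$ the diagonal automorphism $x_j\mapsto c_j x_j$, then $\sigma=\phi\varphi$. Thus $\sigma$ is a weakly-periodic twisted-multiplicative automorphism. I expect this identification of $\sigma$ as weakly-periodic --- in particular verifying that the underlying free-group map is a genuine automorphism of finite order, which boils down to the fact that an element of $\C^{\ast}\Gamma$ equal to the reduced word $x_j$ must be $x_j$ itself --- to be the only step requiring an argument; the rest is formal.

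By Theorem~\ref{thm:main} there is an automorphism $\zeta$ with $\delta:=\zeta\sigma\zeta^{-1}$ diagonal. Being conjugate to $\sigma$, the automorphism $\delta$ still has order $n$, so $C_n\cong\langle\delta\rangle$ acts faithfully and diagonally on $\nfrats{\C}{x}{d}$. Finally I would observe that $\zeta$ restricts to a skew-field isomorphism $\nfrats{\C}{x}{d}^{\langle\sigma\rangle}\xrightarrow{\ \sim\ }\nfrats{\C}{x}{d}^{\langle\delta\rangle}$, since $\sigma(f)=f$ holds if and only if $\delta(\zeta f)=\zeta\sigma\zeta^{-1}(\zeta f)=\zeta f$. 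By \cite[Thm 4.1]{kl20} the latter invariant skew-field is free with $n(d-1)+1$ free generators, hence so is $\nfrats{\C}{x}{d}^{C_n}=\nfrats{\C}{x}{d}^{\langle\sigma\rangle}$, which completes the proof.
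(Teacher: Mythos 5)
Your proof is correct and follows the same route as the paper, whose entire proof is the one-line citation of Theorem~\ref{thm:main} and \cite[Thm 4.1]{kl20}. You in fact supply a detail the paper leaves implicit --- the verification that a twisted-multiplicative automorphism of finite order is weakly-periodic, via the multiplicativity of $\sigma\mapsto\bar\sigma$ and the linear independence of group elements in $\C\Gamma$ --- and this is exactly the right bridge from the hypothesis of the corollary to that of the theorem.
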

\begin{proof}
	Directly from Theorem~\ref{thm:main} and \cite[Thm 4.1]{kl20}.
\end{proof}


\bibliography{references}

\bibliographystyle{alpha}

\end{document}